\newcommand{\M}{\mathcal{M}}
\newcommand{\al}{\alpha}
\newcommand{\fy}{\varphi}
\newcommand{\p}{\partial}
\newcommand{\I}{\infty}
\newcommand{\R}{\mathbb{R}}
\renewcommand{\hat}{\widehat}
\def\la{\langle}
\def\ra{\rangle}
\def\calL{{\mathcal L}}
\numberwithin{equation}{section}
\newtheorem{thm}{Theorem}[section]
\newtheorem{cor}[thm]{Corollary}
\newtheorem{lem}[thm]{Lemma}
\newtheorem{prop}[thm]{Proposition}
\theoremstyle{remark}
\def\il{\int\limits}
\def\less{\lesssim}
\newcommand{\EQ}[1]{\begin{equation}  \begin{split} #1 \end{split} \end{equation} }
\newcommand{\Del}[1]{}
\newcommand{\pr}{\\ &}
\newcommand{\de}{\delta}
\newcommand{\supp}{\operatorname{supp}}
\def\pr{\partial}
\def\dist{\mathrm{dist}}
\def\calH{{\mathcal H}}
\def\nn{\nonumber}
\def\eps{\varepsilon}
\def\f{\frac}
\def\calS{\mathcal{S}}
\def\til{\tilde}
\def\fy{\varphi}
\begin{document}

\title[Critical threshold phenomenon]{Threshold phenomenon for the quintic wave equation in three dimensions}

\author{Joachim Krieger, Kenji Nakanishi, Wilhelm Schlag}

\subjclass{35L05, 35B40}

\keywords{critical wave equation, hyperbolic dynamics,  blowup, scattering, stability, invariant manifold}

\thanks{Support of the National Science Foundation  DMS-0617854, DMS-1160817 for the third author, and  the Swiss National Fund for
the first author  are gratefully acknowledged. The latter would like to thank the University of Chicago for its hospitality in August 2012}

\begin{abstract}
For the critical focusing wave equation $\Box u = u^5$ on $\R^{3+1}$ in the radial case, we establish the role of the ``center stable'' manifold $\Sigma$ constructed in \cite{KS} near
the ground state $(W,0)$ as a threshold between  blowup and scattering to zero, establishing a conjecture going back to numerical work by Bizo\'n, Chmaj, Tabor~\cite{Biz}. The underlying topology is stronger than the energy norm. 
\end{abstract}

\maketitle 

\section{Introduction}

We consider the energy-critical focusing nonlinear wave equation 
\begin{equation}\label{eqn:foccrit}
\Box u = u^5,\,\Box  = \partial_t^2 - \Delta_x,\,u[0] = (u, u_t)_{t=0} = (u_0, u_1)
\end{equation}
on the Minkowski space $\R^{3+1}$ with radial data. The conserved energy is 
\[
E(u,\dot u)= \il_{\R^{3}} \big(\frac12 |\nabla_{t,x} u|^{2} - \f16 |u|^{6}\big)\, dx
\]
 In a remarkable series of papers, \cite{DKM1, DKM2, DKM3, DKM4} 
Duyckaerts, Kenig, and Merle gave the following characterization of the long-time dynamics for radial data $u[0]\in\dot H^{1}\times L^{2}(\R^{3})$ of arbitrary
energy: either one has type-I blowup, i.e., $\| u[t]\|_{\dot H^{1}\times L^{2}}\to\I$ in finite time, or the solution
decomposes into a (possible empty) sum of time-dependent dilates of the ground state stationary solution
\[
W(x): = (1+{|x|^2}/{3})^{-\frac{1}{2}}
\]
together with a radiation term that acts like a free wave, up to a $o(1)$ as $t\to T_{*}\in (0,\infty]$. Here $[0, T_{*})$ is the existence interval of the solution. See~\cite{DKM4}
for the precise theorem. We remark that Kenig, Merle~\cite{KM2} had studied the case of energies $ E(u_{0},u_{1})<E(W,0)$ and established a finite-time blowup vs.\ scattering dichotomy depending on whether $\|\nabla u_{0}\|_{2}>\|\nabla W\|_{2}$ or $\|\nabla u_{0}\|_{2}<\|\nabla W\|_{2}$. For the subcritical case, Payne and Sattinger~\cite{PS} had given
such a criterion but with global existence, and the scattering remained unknown. The latter gap was closed only recently by Ibrahim, Masmoudi, and the second author~\cite{IMN} using the Kenig-Merle method. 

The dynamics for the case $E(u_{0},u_{1})=E(W,0)$ was described by Duyckaerts, Merle~\cite{DM1, DM2} who constructed  the
one-dimensional stable and unstable manifolds associated with~$W$. Finally, \cite{DKM1} allowed energies
slightly larger than~$E(W,0)$, and it was shown there that general type-II blowup occurs by dynamical non-selfsimilar
rescaling of~$W$. The existence of such blowup solutions was established by the first and third authors and Tataru in~\cite{KST}. 
An analogous construction in infinite time was carried out by Donninger and the first author
 in~\cite{DoKr}.   In this context we would also like to mention the type-II blowup construction by Hillairet and Rapha\"{e}l~\cite{HR} 
 for the $4$-dimensional  semilinear wave equation. 

From a different perspective, and motivated in part by the phenomenological work~\cite{Biz} of Bizo\'n, Chmaj, and Tabor, the first and third authors investigated in~\cite{KS}
the question of {\em conditional stability} of the ground state~$W$. This is a very delicate question, and remains unsolved in the energy topology.
Note that the aforementioned blowup solutions can be chosen to lie arbitrarily close relative to the energy topology to the soliton curve $\calS:=\{W_{\lambda}\}_{\lambda>0}$ where
$W_{\lambda}(x)=\sqrt{\lambda} W(\lambda x)$. 
However, in a much stronger topology, \cite{KS} established the existence of a codimension-$1$ Lipschitz manifold~$\Sigma$ near $W$ so that data chosen
from this manifold exhibit asymptotically stable dynamics. See~\cite{KS} for the exact formulation. 

The question remained as to the dynamics for data near~$\Sigma$, but which do not fall on~$\Sigma$. As a start in this direction we mention the work by Karageorgis-Strauss \cite{KaraStr} for a related model equation of the same scaling class as \eqref{eqn:foccrit} where they show blow up for certain data with energy above that of the ground state, which are in a sense 'above the tangent space' of $\Sigma$. \\
In the subcritical case, the second and third
authors had shown, see~\cite{NakS0, NakS3, NakS2, NakS}, that this hypersurface~$\Sigma$ divides a small ball into two halves  which exhibit the finite-time blowup vs.\ scattering
dichotomy in forward time. This was carried out in the energy class, and $\Sigma$ was identified with the center-stable manifold associated
with the hyperbolic dynamics generated by linearizing about the ground state. See the seminal work by Bates, Jones~\cite{BJ} for an invariant manifold theorem
in infinite dimensions, with applications to a certain class of Klein-Gordon equations. \\

For the energy critical wave equation~\eqref{eqn:foccrit}, the authors~\cite{CNLW1, CNLW2} had shown a somewhat weaker result, 
namely the existence of four pairwise disjoint 
sets $A_{\pm,\pm}$ in the energy space near the soliton curve such that:  (i) each set has nonempty interior (ii)  the long-term dynamics (in both positive
and negative times) for data taken from each set
is determined as either blowup or global existence and scattering. 

However, the question of existence of a center-stable manifold near $W$ in the energy space remains open and appears delicate. 
Therefore, the results of~\cite{CNLW1, CNLW2} are not as complete as  those in~\cite{NakS}, in the sense that no comprehensive description of the dynamics near the soliton curve
is obtained. This is also explained by the fact that the dynamics of the energy critical equation appear more complex due to the scaling invariance which is not a feature of the Klein-Gordon equation considered in~\cite{NakS}, as evidenced by the variety of exotic type-II solutions. Moreover, the construction of the ``center-stable" manifold\footnote{We place ``center-stable'' in quotation marks, since $\Sigma$
cannot be interpreted as such an object. In fact,  the space $X_{R}$ is not invariant under the flow.} in~\cite{KS} is significantly more involved than the corresponding manifold for the subcritical Klein-Gordon equation.   

In this paper, we return to the point of view of~\cite{KS} in order to establish a description of all possible dynamics with data near $(W, 0)$ in the following main theorem, albeit in a stronger topology than that given by the energy. 
To formulate it, we need the linearized operator $H:= -\Delta-5W^{4}$. It exhibits a unique negative eigenvalue $-k_{0}^{2}$ with $Hg_{0}=-k_{0}^{2}\,g_{0}$,
and $g_{0}>0$ is smooth, radial, and exponentially decaying. 

\begin{thm}\label{thm:Main}
Fix $R>1$. There exists an $\eps_* = \eps_*(R)>0$ with the following property. Consider all initial data $(W+u)[0]: = (W + f_1, f_2)$ with $\|f_1\|_{H^3} + \|f_2\|_{H^2}<\eps_*$ and both $f_{1,2}$ supported within $B(0, R)$. Also, denote by $\Sigma$ the co-dimension one hypersurface within this neighborhood constructed in~\cite{KS}. Pick initial data $u[0]\in \Sigma$ with
\[
u(0, \cdot) = f_1 + h(f_1, f_2)g_0,\quad u_t(0, \cdot) = f_2
\]
where we have $\langle k_0 f_1 + f_2, g_0\rangle = 0$. Then the following holds: 
\begin{itemize}
\item if $\eps_*>\delta_0>0$, then initial data 
\[
\tilde{u}(0, \cdot) = W+ f_1 + (h(f_1, f_2)+\delta_0)g_0,\quad \tilde{u}_t(0, \cdot) = f_2
\]
lead to solutions blowing up in finite positive time.
\item if $-\eps_*<\delta_0<0$, then initial data 
\[
\tilde{u}(0, \cdot) = W+ f_1 + (h(f_1, f_2)+\delta_0)g_0,\quad \tilde{u}_t(0, \cdot) = f_2
\]
lead to solutions existing globally in forward time and scattering to zero in the energy space. 
\end{itemize}
\end{thm}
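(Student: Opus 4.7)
The plan is to follow the ejection / one-pass framework of Nakanishi--Schlag \cite{NakS}, adapted to the energy critical setting by working in the stronger $H^3\times H^2$ topology of \cite{KS}. Write the perturbed solution as $\tilde u(t) = W_{\lambda(t)} + v(t)$, with $\lambda(t)$ the standard modulation parameter fixing $v(t)$ orthogonal to the scaling zero mode $\Lambda W_{\lambda(t)}$, and decompose $v$ spectrally as
\begin{equation*}
v(t) = a_+(t)\,\mathbf{e}_+(t) + a_-(t)\,\mathbf{e}_-(t) + \gamma(t),
\end{equation*}
where $\mathbf{e}_\pm(t)$ span the one-dimensional unstable and stable subspaces of the linearization at $W_{\lambda(t)}$ (so that linearly $\dot a_\pm = \pm k_0 a_\pm$) and $\gamma$ lies in the continuous spectral subspace. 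Because the unperturbed datum lies on $\Sigma$, the Lyapunov--Perron construction of \cite{KS} forces the unstable coordinate $a_+(0)$ to vanish to leading order. Adding $\delta_0 g_0$ to the first component then produces $a_+(0) = c\,\delta_0 + o(\delta_0)$, with $c>0$ determined by the angle between $g_0$ and $\mathbf{e}_+(0)$, and perturbs $a_-(0)$ and $\gamma(0)$ by at most $O(|\delta_0|)$.

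The ejection lemma, proved as in the subcritical setting, asserts that so long as $\|v(t)\|_{H^3\times H^2}\le\eta$ for a small absolute constant $\eta>0$, the linear growth $a_+(t)\sim a_+(0)\,e^{k_0 t}$ persists, $a_-$ decays, and $\gamma$ stays controlled. There is therefore a first exit time $T_*$ at which $\|v(T_*)\|_{H^3\times H^2}=\eta$, and at this time $a_+(T_*)\sim \sign(\delta_0)\,\eta$ dominates the other components. The next ingredient is a one-pass property: once ejected from a fixed neighborhood of the soliton curve $\calS$, the solution cannot return to a strictly smaller one. In the critical setting this is obtained by combining the virial/Morawetz identities used in \cite{NakS} with control of the modulation parameter $\lambda$, exploiting the compact support of the datum and finite speed of propagation to forbid recurrence at a scale drastically different from the initial one; persistence of the $H^3\times H^2$ norm rules out the concentration and dispersion scenarios that otherwise arise from the scaling invariance.

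Once the one-pass property is in place, the sign of $a_+(T_*)$ triggers the final dichotomy through a variational criterion in the spirit of Kenig--Merle. For $\delta_0>0$ the ejected state $\tilde u(T_*)$ satisfies $\|\nabla \tilde u(T_*)\|_2>\|\nabla W\|_2$ together with a strictly negative virial functional, whence the Glassey/Levine convexity argument produces type-I blowup in finite positive time. For $\delta_0<0$ the opposite strict variational inequalities hold, and the Kenig--Merle concentration-compactness plus rigidity scheme, applied to the Cauchy problem at time $T_*$, yields global existence and scattering to zero in the energy norm. The required strictness follows from the positivity of $g_0$, from comparison of energy identities between $t=0$ and $t=T_*$, and from the quantitative gain $|a_+(T_*)|\sim\eta$.

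The principal obstacle is establishing the one-pass property under critical scaling: since $\calS$ is a non-compact one-parameter family, a hypothetical return to a neighborhood of $\calS$ could occur at a scale $\lambda$ vastly different from the initial unit scale, a phenomenon absent in the subcritical Klein--Gordon setting of \cite{NakS}. Compact support and finite speed of propagation must be used to exclude both concentration $(\lambda\to\infty)$ and escape $(\lambda\to 0)$ at any would-be recurrence time, so that the sign of $a_+$ is preserved throughout the excursion. A secondary difficulty is bridging the strong $H^3\times H^2$ topology, where $\Sigma$ and the ejection live, to the energy-norm statements of Kenig--Merle; this requires persistence-of-regularity estimates for the critical wave equation that are compatible with the modulated ansatz.
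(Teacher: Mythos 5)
Your broad outline (eject away from $\Sigma$, then invoke a one-pass/variational dichotomy) matches the paper's strategy, but the heart of the proof is missing, and the point where you wave it away is precisely where the hard work lives.

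You write that ``the ejection lemma, proved as in the subcritical setting, asserts that so long as $\|v(t)\|\le\eta$ the linear growth $a_+(t)\sim a_+(0)e^{k_0t}$ persists.'' This is exactly the step that does \emph{not} go through by the subcritical argument, and the paper is explicit that this is the main novelty relative to~\cite{CNLW1,CNLW2}. The difficulty: the unperturbed $\Sigma$-solution $u=W_{a(t)}+u_*$ occupies the whole $\eps_*$-ball, and its radiation part $u_*$ decays only like $\langle t\rangle^{-1}$ in $L^\infty_x$. If you track the unstable coefficient of $v=\tilde u-W_{\lambda(t)}$ directly, the linear-in-$v$ coupling term produces a source $\sim\langle t\rangle^{-1}|a_+(t)|$ whose time integral diverges logarithmically, so the naive Gronwall/Duhamel argument for $a_+\sim a_+(0)e^{k_0 t}$ fails when $\delta_0\ll\eps_*$. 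In~\cite{CNLW1,CNLW2} this is avoided by taking the initial excitation so large that the trajectory is ejected before the slow coupling matters; here $\delta_0$ is allowed to be arbitrarily small, so that escape hatch is closed.

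The paper's fix is twofold and you should compare it with what you proposed. First, it does not modulate $\tilde u$ against $W_{\lambda(t)}$; it subtracts off the \emph{known} $\Sigma$-solution and studies the difference $\eta=\tilde u-u$, which starts at size $|\tilde\delta_0|$ (not $\eps_*$) and obeys an equation whose linear sources are genuinely $o(\eta)$ with spatially localized, time-decaying coefficients. Second, even then the quadratic interaction $u_*\eta W^3$ leaves a borderline $\langle t\rangle^{-1}$ rate, and this is absorbed by rewriting the Duhamel formula with a phase $\Gamma(s,t)=\int_s^t\langle u_*W^3g_\infty,g_\infty\rangle+\dots$ and proving the crucial averaged dispersive estimate $\|u_*\|_{L^\infty_x L^1_t}\ll 1$ (Proposition~\ref{prop:fullextradisp}), together with $\sup_t|\int_0^t(a(\infty)-a(s))\,ds|\ll1$ (Lemma~\ref{lem:a(t)keybound}), neither of which is in~\cite{KS}. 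Without an ingredient of this type, your ``ejection lemma'' would only give $a_+(t)\lesssim|\delta_0|\,t^{C}e^{k_0t}$, which is not enough to reach the exit scale $\eps_0$ with control of the sign and of the continuous-spectrum remainder.

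The post-ejection part of your argument is overworked relative to the paper: once the solution exits to distance $\eps_0\gg\eps_*$ from $\mathcal S$ with a clean decoupling and an unstable coefficient of definite sign and size comparable to the distance, the paper simply cites the one-pass theorem and the variational $K$-functional dichotomy already established in~\cite{CNLW1} (their equation (3.44) and Propositions 5.1, 6.2). Your worry about recurrence at a drastically different modulation scale and about ``bridging'' the $H^3\times H^2$ topology to the energy one is handled there; it is not an open issue here. So the gap in your proposal is not in the non-perturbative tail but in the perturbative ejection from $\Sigma$ itself.
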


 The hyper-plane $\langle k_0 f_1 + f_2, g_0\rangle = 0$ is the tangent space to~$\Sigma$ at $(W,0)$, and it is denoted by~$\Sigma_{0}$ in~\cite{KS}.
The function $h$ is constructed in~\cite{KS} and for any $0<\delta\le \eps_{*}(R)$ one has the following properties: 
define the space
\[ X_R:=\{ (f_1,f_2)\in H^3_{\rm rad}(\R^3)\times H^2_{\rm rad}(\R^3)\:|\: \supp(f_j)\subset B(0,R)\} \]
Then 
$h:B_\delta(0)\subset\Sigma_0\to \R$ where $B_{\de}(0)$ is relative to~$X_{R}$ and 
one has  the estimates
\EQ{\nn 
|h(f_1,f_2)| &\less \|(f_1,f_2)\|_{X_R}^2, \qquad \forall\; (f_1,f_2)\in B_\delta(0)\nn \\
|h(f_1,f_2)-h(\til f_1, \til f_2)| &\less \delta \|(f_1,f_2)-(\til
f_1,\til f_2)\|_{X_R}\qquad \forall\; (f_1,f_2), (\til f_1,\til f_2)\in B_\delta(0) \label{eq:heta}
}
The Lipschitz graph $\Sigma$ is given by $(f_{1} + h(f_{1},f_{2})g_{0}, f_{2})$ where $(f_{1},f_{2})\in B_{\delta}(0)\subset \Sigma_{0}$. 
It is a Lipschitz hypersurface in~$X_{R}$ which approaches $\Sigma_{0}$ quadratically near the point~$(W,0)$. It is thus clear that $\Sigma_{0}$
is the tangent space to~$\Sigma$ at~$(W,0)$. 

Finally, we note that our choice of topology is not optimal for this type of theorem, and our approach can be extended to more general initial conditions.
On the other hand, we emphasize that the distinction between the energy topology $\dot H^{1}\times L^{2}$ on the one hand, and a stronger one such
as ours has very dramatic effects. Indeed, solutions starting on the manifold $\Sigma$ as constructed in~\cite{KS} are shown there to approach 
$
W_{a(\I)}
$
up to a radiation part where $a(\I)\in (0,\I)$. If a center-stable manifold can be constructed in $\dot H^{1}\times L^{2}$, then we cannot expect
the same behavior for solutions associated with such an object. Indeed, from \cite{KST} and~\cite{DoKr} we know that energy 
solutions exist arbitrarily close to $(W,0)$ in the energy topology for which $a(t)$ can approach either $0$ or~$\I$ in finite or infinite time.


The idea of the proof of the theorem is to combine the precise description of solutions with data on $\Sigma$ contained in~\cite[Definition 3]{KS} with the exit characterization of solutions established in~\cite{CNLW1}. The latter work allows us to confine ourselves to the situation in which the solution is close to ${\mathcal{S}}$, the family of rescalings $W_{\lambda} = \lambda^{\frac{1}{2}}W(\lambda\cdot)$ of $W$, whence we can rely purely on perturbative methods. 
The key for the proof is the following result.

\begin{prop}\label{prop:key} 
There exists $1\gg \eps_0\gg \eps_*$ with the following property: 
Let $\tilde{u}[0]$ be data as in Theorem~\ref{thm:Main}. Then there exist $\tilde{\delta}_0\neq 0$ of the same sign as $\delta_0$, a constant $k_\infty$ with $|k_0 - k_\infty|\ll 1$, and  a finite time $T = T(\tilde{u}[0])$ with $\eps_0= |\tilde{\delta}_0|e^{k_\infty T}\gg \eps_*$ and such that at time $t = T$, we have a decoupling 
\[
\tilde{u}(t, \cdot) = W_{\alpha_T} + \tilde{v}_{\alpha_T},\quad |1-\alpha_T|\ll 1, 
\]
with 
\begin{equation}\label{eqn:keyortho}
\langle  \tilde{v}_{\alpha_T}, \Lambda^*g_{\alpha_T}\rangle = 0,\quad \Lambda = r\partial_r +\frac{1}{2}
\end{equation}
and furthermore 
\begin{equation}\label{eqn:keygrowth}
\langle \tilde{v}_{\alpha_T},\,g_{\alpha_T}\rangle \simeq \tilde{\delta}_0e^{k_\infty T}
\end{equation}
\end{prop}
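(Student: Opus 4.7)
The plan is to put $\tilde u(t)$ in a modulated frame along the soliton family, compare it to the reference solution $u\in\Sigma$, and then bootstrap the exponential growth of the unstable mode up to the exit time~$T$. I write $\tilde u(t,\cdot)=W_{\alpha(t)}+v(t,\cdot)$ and fix $\alpha(t)$ by the orthogonality condition $\lan v(t),\Lambda^{*}g_{\alpha(t)}\ran=0$; for small perturbations this is solvable by an implicit function argument applied to $\alpha\mapsto\lan \tilde u(t)-W_\alpha,\Lambda^{*}g_\alpha\ran$. The equation for $v$ takes the schematic form
\[
(\pd_t^2+H_{\alpha(t)})v=-\ddot\alpha\,\pd_\alpha W_{\alpha(t)}-\dot\alpha^2\,\pd_\alpha^2 W_{\alpha(t)}+N(v,W_{\alpha(t)}),
\]
with $H_\alpha=-\Delta-5W_\alpha^4$ carrying the negative eigenvalue $-\alpha^2 k_0^2$ and eigenfunction $g_\alpha$. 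Projecting onto $g_{\alpha(t)}$, the scalar $a(t):=\lan v(t),g_{\alpha(t)}\ran$ obeys
\[
\ddot a(t)-\alpha(t)^2 k_0^2\,a(t)=\mathcal{R}(t),
\]
where $\mathcal{R}(t)$ collects the nonlinear and modulation remainders and is of size $O(\|v(t)\|^2)$ once $\dot\alpha,\ddot\alpha$ are solved out from the orthogonality condition.

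To identify $\tilde\delta_0$ and $k_\infty$, I run the analogous decomposition for the solution $u(t)\in\Sigma$; the resulting projection $a^{(u)}(t)$ carries no exponentially growing component and in fact decays, by \cite[Def.~3]{KS}. The difference $w=\tilde u-u$ has initial data $w[0]=(\delta_0 g_0,0)$, so the projected difference $b:=a-a^{(u)}$ solves $\ddot b-k_0^2 b=O(\|w\|^2)$ with $b(0)=c_1\delta_0+O(\delta_0^2)$ and $\dot b(0)=O(\delta_0^2)$, and therefore splits as $b(t)\approx A_+e^{k_0 t}+A_-e^{-k_0 t}$ with $A_+\simeq c_1\delta_0/2\ne 0$. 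I then define $\tilde\delta_0$ to be the renormalized growing coefficient of $a(t)$ after the nonlinear and modulation corrections are absorbed; it has the same sign as $\delta_0$ with $|\tilde\delta_0|\simeq|\delta_0|$, while the effective exponent shifts from $k_0$ to a nearby $k_\infty$ because $\alpha(t)^2 k_0^2$ stays close to $k_0^2$ throughout. The candidate exit time is $T=k_\infty^{-1}\log(\eps_0/|\tilde\delta_0|)$.

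On $[0,T]$ I run a bootstrap on
\[
\bigl|a(t)-\tilde\delta_0\,e^{k_\infty t}\bigr|\le\tfrac12|\tilde\delta_0|\,e^{k_\infty t},\qquad \|v^{\perp}(t)\|\lec\eps_*+|a(t)|,
\]
where $v^{\perp}(t)$ denotes the projection of $v(t)$ onto the spectral complement of $g_{\alpha(t)}$. The first is improved by the projected ODE together with $\mathcal{R}=O(\|v\|^2)$, and the second by writing $\tilde u=u+w$ and combining the uniform control on $u$ furnished by \cite{KS} with exponential decay on the stable subspace of $H_{\alpha(t)}$ and dispersive estimates for the radiative remainder. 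Since $T$ is only logarithmically long, the accumulated quadratic errors stay $\ll |\tilde\delta_0|e^{k_\infty t}$ provided $\eps_*\ll\eps_0\ll 1$; the bootstrap closes, and at $t=T$ one reads off \eqref{eqn:keygrowth}, while \eqref{eqn:keyortho} and $|1-\alpha_T|\ll 1$ are built in by construction. The main obstacle is precisely this propagation of control on $v^{\perp}$ in the $X_R$-topology across a logarithmically long interval in the presence of an exponentially growing unstable mode whose nonlinear back-reaction must never be allowed to dominate; this is why the hierarchy $\eps_*\ll\eps_0\ll 1$ is enforced, and it will require high-regularity local-energy/Strichartz estimates for the time-dependent operator $H_{\alpha(t)}$.
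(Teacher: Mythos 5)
Your overall plan — modulate, project onto the unstable mode, bootstrap the exponential growth up to the exit time — is broadly aligned with the paper's, but it glosses over the one genuine difficulty, and in so doing asserts something false. The issue is your claim that the projected remainder satisfies $\mathcal{R}(t)=O(\|v(t)\|^2)$ and can therefore be absorbed because the interval is only logarithmically long. In your parametrization $v$ contains \emph{both} the radiative part $u_*$ (size $\sim\eps_*$, decaying only like $\langle t\rangle^{-1}$ in $L^\infty_x$) and the growing piece $\sim|\tilde\delta_0|e^{k_\infty t}g$. The cross term in $N(v,W)$, schematically $u_*\,W^3\,(\delta(t)g)$, is \emph{linear} in the growing mode with a slowly decaying time-dependent coefficient $\sim\eps_*\langle t\rangle^{-1}$. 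Fed back through Duhamel, this produces a factor $\exp\!\bigl(\int_s^t\langle u_*(s_1)W^3 g,g\rangle\,ds_1\bigr)$ in the representation of the unstable amplitude (compare the paper's equations \eqref{eq:n+eqn}--\eqref{eq:n+duhamel}). With only the pointwise bound $\|u_*(t)\|_{L^\infty_x}\lesssim\eps_*\langle t\rangle^{-1}$, this phase is $O(\eps_*\log T)$, and since $T\to\infty$ as $\delta_0\to0$ this is \emph{not} $\ll1$; the bootstrap you wrote does not close, and no hierarchy $\eps_*\ll\eps_0\ll1$ repairs it, because $\log T\sim\log\log(1/|\tilde\delta_0|)$ is uncontrolled.

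The whole point of the paper is to circumvent exactly this. The authors prove a new dispersive bound $\|u_*\|_{L^\infty_x L^1_t}\ll1$ (Propositions \ref{prop:extradisp} and \ref{prop:fullextradisp}, an entire section of the paper), which says that although $u_*$ decays only like $1/t$ pointwise, its time-integral at each point is \emph{small}. Combined with Lemma \ref{lem:a(t)keybound} (an averaged bound on $\int_0^t(a(\infty)-a(s))\,ds$, also relying on this $L^\infty_x L^1_t$ estimate), this makes the corrected exponent $\Gamma(s,t)$ uniformly $O(1)$ and closes the bootstrap. Your write-up contains no substitute for this step. Also, two smaller points: the paper does \emph{not} modulate $\tilde u$ directly with a $\Lambda^*g_{\alpha(t)}$ orthogonality for all $t$; it uses the reference solution's dilation $a(t)$ from \cite{KS} to write $\tilde u=W_{a(t)}+u_*+\eta$ and splits $\eta$ via the frozen asymptotic mode $g_\infty$, performing the $\Lambda^*g_{\alpha_T}$ re-decomposition only once, at $t=T$ — this avoids having to control $\dot\alpha,\ddot\alpha$ driven by $\eta$. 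And $k_\infty$ is not some average of $\alpha(t)k_0$; it is the eigenvalue of the \emph{frozen} asymptotic Hamiltonian $\calH(a(\infty))$, and the deviation of the instantaneous operator from it is what gets absorbed into $\Gamma$.
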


Proposition~\ref{prop:key} guarantees that data which are obtained by adding  $\delta_0 g_0$ to a point on~$\Sigma$ diverge exponentially away from~$\Sigma$. 
 The trajectory moves away from the ``tube'' of rescaled ground states ${\mathcal{S}}$ in a specific direction, depending on the sign of $\delta_0$.
Note that the ``excitation'' of the unstable mode~$g_{0}$ can be
arbitrarily small in Theorem~\ref{thm:Main}. This is the main distinction from our previous works~\cite{CNLW1, CNLW2}. Indeed, in those cases this excitation needed to be
sufficiently large so as to dominate the evolution from the beginning (and for as long as the trajectory remained inside a small neighborhood of~$(W,0)$, since
otherwise the linearized dynamics cannot be compared to the nonlinear one). 

At least on a heuristic level, our construction in Proposition~\ref{prop:key} is motivated by the generalizations of the well-known Hartman-Grobman linearization theorem 
which applies to ODEs of the form $\dot x=Ax+f(x)$ in~$\R^{n}$ where $f(0)=Df(0)=0$ provided $A$ has no eigenvalues on the imaginary axis. In that case
there exists a homeomorphism $y=y(x)$ near $x=0$ which linearizes the ODE in the sense that $\dot y=Ay$. If $A$ does have spectrum on
the imaginary axis, then there is a result known as Shoshitaishvili's theorem~\cite{Sho1, Sho2}, see also Palmer~\cite{Palm},   
which ensures partial linearization of the ODE in the form
\EQ{\label{lin}
\dot y = By + \fy(y),\quad \dot z = Cz, 
}
after a change of variables near $x=0$. Here $B$ has its spectrum on the imaginary axis, and $C$ is the hyperbolic part, and $\fy$ satisfies $\fy(0)=D\fy(0)=0$ (the $y$-equation captures the center-dynamics). Note that in the formulation~\eqref{lin} the center-stable manifold is precisely given by $z_{+}=0$ where $z_{+}$ are the coordinates for which $C$ is expanding. In addition, since the change of coordinates is in fact bi-H\"older  it also follows
from~\eqref{lin} that the center-stable manifold~$\M_{cs}$ is {\em exponentially repulsive} in the sense that if a trajectory starts near but not on $\M_{cs}$, then it will move away
exponentially from~$\M_{cs}$. 

However, in this paper we do not rely on a partial linearization as in~\eqref{lin} since such a result is not available in our context. 
Rather, we show that the coupling between the ``center-stable'' dynamics
obtained in~\cite{KS} and the unstable hyperbolic dynamics is of higher order in a suitable sense, which implies the exponential push
away from~$\Sigma$.

We conclude this introduction by showing how to deduce the main theorem from the previous proposition. 

\begin{proof}[Proof of Theorem~\ref{thm:Main} assuming Proposition~\ref{prop:key}] Picking $\eps_*$ sufficiently small, the theory of \cite{CNLW1} applies. In particular, while the data $\tilde{u}[0] = \big(\tilde{u}(0, \cdot), \tilde{u}_t(0, \cdot)\big)$ satisfy 
\EQ{\label{nahe}
\dist_{\dot{H}^1\times L^2}(\tilde{u}[0], \mathcal{S}\cup -\mathcal{S})\lesssim \eps_*
}
where we identify ${\mathcal{S}}: = (W_{\lambda}, 0)_{\lambda>0}$, we have 
\EQ{\label{fern}
\dist_{\dot{H}^1\times L^2}(\tilde{u}[T], \mathcal{S}\cup -\mathcal{S})\simeq  |\tilde{\delta}_0| e^{k_\infty T}
}
provided we choose $|\tilde{\delta}_0| e^{k_\infty T}$ (and thus $\eps_0$) sufficiently large in relation to $\eps_*$. Indeed, this is a direct consequence of~\eqref{eqn:keygrowth} combined with \cite[Lemma 2.2]{CNLW1}. But then equation (3.44) as well as Proposition 5.1, Proposition 6.2 in \cite{CNLW1} imply that data with $\delta_0>0$ result in finite time blow up, while data with $\delta_0<0$ scatter to zero as $t\rightarrow +\infty$, with finite Strichartz norms. 
\end{proof}

Inspection of this proof shows that we rely on several previous results. On the one hand, the proof of Proposition~\ref{prop:key} depends crucially
on the asymptotic analysis of the stable solutions constructed in~\cite{KS}, including all dispersive estimates of the radiative part. On the other hand, for the non-perturbative analysis we rely on key elements of our previous
work~\cite{CNLW1}, namely the one-pass theorem and the ejection mechanism in relation to the variational structure (see the $K$-functional in~\cite{CNLW1}).
Note also that the latter paper requires the main theorem from~\cite{DKM1} in order to preclude blowup in the regime $K\ge0$ once the solution has excited  the soliton tube.

\section{Proof of Proposition~\ref{prop:key}} 

It remains to prove Proposition~\ref{prop:key}, which we carry out via a bootstrap argument using suitable norms. The norms we use for the perturbation are adapted
from those introduced in~\cite{KS}. 

\subsection{A modified representation of the data}

Throughout we assume that $(f_1, f_2)$ satisfy the conditions of Theorem~\ref{thm:Main}. We start with data of the form $$(f_1+h(f_1, f_2)g_0, f_2)\in \Sigma$$ with the orthogonality condition $\langle k_0 f_1 +f_2, g_0\rangle  = 0$. According to~\cite{KS}, these data can be evolved globally in forward time to a function $u(t, \cdot)$ so that $W_{a(t)} + u(t, \cdot)$ solves~\eqref{eqn:foccrit}, with $|a(t) - a(0)|\ll 1$ for all $t\geq 0$. Let $g_{\infty} = g_{\infty}(f_1, f_2)$ be the unstable mode for the operator $$\calH(a(\infty)) = -\Delta -5W^4_{a(\infty)}=:-\Delta+V$$ which is the reference Hamiltonian at $t = +\infty$.  Writing $$\Sigma_{0}: = \{\langle k_0 f_1 +f_2, g_0\rangle  = 0\}$$ for
the tangent plane to~$\Sigma$, pick $\tilde{h}(f_1, f_2)$ such that 
\[
(f_1+h(f_1, f_2)g_0 - \tilde{h}(f_1, f_2)g_{\infty}, f_2)\in \Sigma_{0}.
\]
This is possible since  $\|g_0 - g_{\infty}\|_2\ll 1$. 
The map 
\[
(f_1, f_2)\mapsto (f_1+h(f_1, f_2)g_0 - \tilde{h}(f_1, f_2)g_{\infty}, f_2) = :(\tilde{f}_1, f_2)
\]
is a Lipschitz continuous\footnote{In fact, this map is smoother but we do not make this explicit in \cite{KS}.} homeomorphism from a small neighborhood $U\subset\Sigma_0$ of $0$ (within the admissible data set as in Theorem~\ref{thm:Main}) to another neighborhood~$V$. In fact, it equals the identity plus a Lipschitz map with very small Lip constant. This follows from the fact that (see \cite{KS}, Section~4)
\EQ{\nn
|\tilde{h}(f_1, f_2)|\simeq  |h(f_1, f_2)|&\lesssim \|(f_1, f_2)\|^2,\\
\big|h(f_1, f_2) - h(g_1, g_2)\big|&\ll \|f_1 - g_1\|_{H^3} +   \|f_2 - g_2\|_{H^2}
}
Committing abuse of notation, we write $\tilde{h} = \tilde{h}(\tilde{f}_1, f_2)$, $g_{\infty} = g_{\infty}(\tilde{f}_1, f_2)$, where it is to be kept in mind that $g_\infty$ is associated with the asymptotic  operator determined by the data $(f_1+h(f_1, f_2)g_0, f_2)$. Then we have the identity 
\[
f_1+h(f_1, f_2)g_0 = \tilde{f}_1 + \tilde{h}(\tilde{f}_1, f_2)g_{\infty}
\]
and furthermore 
\[
( \tilde{f}_1 + \tilde{h}(\tilde{f}_1, f_2)g_{\infty}, f_2) \in \Sigma
\]
We next need to find an analogous representation for the shifted initial data 
\[
(f_1+(h(f_1, f_2)+\delta_0)g_0, f_2)
\]
 Observe that the map 
 \[
 (\tilde{f}_1, f_2, \tilde{\delta}_0)\mapsto  \tilde{f}_1 + (\tilde{\delta}_0+\tilde{h}(\tilde{f}_1, f_2))g_{\infty}
\]
is again Lipschitz and a homeomorphism for small values of the arguments. In particular, we can write 
\[
f_1+(h(f_1, f_2)+\delta_0)g_0 = \tilde{f}_1 + (\tilde{\delta}_0+\tilde{h}(\tilde{f}_1, f_2))g_{\infty}
\]
where $\tilde{\delta}_0$ is a Lipschitz-function of $(f_1, f_2, \delta_0)$. Also, observe that $\Sigma$ divides the data space into two connected components, which can be characterized by 
$\tilde{\delta}_0>0$, $\tilde{\delta}_0<0$. The same comment applies to $\delta_0$, and necessarily $\delta_0>0$ corresponds to $\tilde{\delta_0}>0$. 

\subsection{The perturbative ansatz} Now given $f_1, f_2, \delta_0$, let $u$ be the solution of \eqref{eqn:foccrit} corresponding to the data 
\[
(W+\tilde{f}_1 + \tilde{h}(\tilde{f}_1, f_2)g_{\infty}, f_2),\,(\tilde{f}_1 + \tilde{h}(\tilde{f}_1, f_2)g_{\infty}, f_2)\in \Sigma
\]
These are of course in general different from $(f_1+ h(f_1, f_2)g_0, f_2)$. Note that $g_\infty$ is the unstable eigenmode corresponding to the evolution of $u$ at $t = +\infty$. Also, denote by 
$\tilde{u}$ the solution corresponding to the data 
\[
(W+f_1+(h(f_1, f_2)+\delta_0)g_0, f_2) = (W+\tilde{f}_1 + (\tilde{\delta}_0+\tilde{h}(\tilde{f}_1, f_2))g_{\infty}, f_2)
\]
We shall first make the simple perturbative ansatz
\begin{equation}\label{eqn:pert}
\tilde{u} = u + \eta = W_{a(t)} + u_{*} + \eta
\end{equation}
where we use the decoupling 
\[
u(t, \cdot) = W_{a(t)} + u_{*}(t, \cdot)
\]
given in \cite{KS} with the bounds 
\begin{equation}\label{eq:u_*bounds1}
\|u_*(t, \cdot)\|_{L_x^\infty}\leq \de \langle t\rangle^{-1},\quad \|\nabla_x u_{*}(t, \cdot)\|_{L_x^2+L_x^\infty}\leq  \de \langle t\rangle^{-\eps}
\end{equation}
\begin{equation}\label{eq:u_*bounds2}
\|\nabla u_*(t, \cdot)\|_{L_x^2} + \|\nabla^2 u_*(t, \cdot)\|_{L_x^2}\leq  \de,\quad |u_*(x, t)|\less \de \langle x\rangle^{-1}
\end{equation}
for suitable $\delta = \delta(\eps_*, R)\ll 1$; in fact, $\delta=C_0\eps_*$ where $C_0$ is a big constant (depending on~$R$).  For the dilation parameter one has the bounds 
\begin{equation}
\label{eq:abounds}
|a(t) - a_{\infty}|\leq \delta \langle t\rangle^{-1},\,|\dot{a}(t)|\leq \delta \langle t\rangle^{-2}
\end{equation}
and in particular $|a(t) - a_\infty|\ll 1$. 
In view of \eqref{eqn:pert}, we obtain the following equation for~$\eta$: 
\EQ{\label{eqn:eta}
\partial_{tt}\eta + \calH(a(\infty))\eta &= N(u_*+\eta, W_{a(t)}) - N(u_{*}, W_{a(t)}) \\
&\qquad+ \big(\calH(a(\infty)) - \calH(a(t))\big)\eta =: F(t)
 }
Here we set $\calH(a) = -\Delta_x - 5W_a^4$, and borrowing notation from~\cite{KS}, we have 
\EQ{\label{NL}
 N(v, W_{a}) &= (v+W_a)^5 - W_a^5 - 5W_a^4 v 
}
The right-hand side in~\eqref{eqn:eta} further equals
\EQ{\label{F}
F(t) &= 5(u^4-W_{a(t)}^4)\eta + 10 u^3\eta^2 + 10 u^2 \eta^3 + 5 u \eta^4 + \eta^5 \\
&\qquad\qquad\qquad + 5(W_{a(t)}^4-W_{a(\I)}^4)\eta \\
 (u^4-W_{a(t)}^4)\eta &= (u_*^4  +4 u_*^3 W_{a(t)} + 6 u_*^2 W_{a(t)}^2 + 4 u_* W_{a(t)}^3)\eta
}
Note that all terms linear in $\eta$ are of the form $o(\eta)$, and they are also localized in space due to the decay of $u_*$ and~$W$. 
 We shall write $ \calH(a(\infty)) = \calH_{\infty}$ from now on, and denote the corresponding unstable mode by $g_\infty$, with $\calH_\I g_\infty = -k_\infty^2 g_\infty$. 
 It is natural to decompose 
 \begin{equation}\label{etazerl}
 \eta = P_{g_\infty^{\perp}}\eta +\delta(t)g_\infty =: \tilde{\eta}(t, \cdot) + \delta(t)g_\infty
 \end{equation}
 The key to proving Proposition~\ref{prop:key}  is the following result.
 
 \begin{prop}\label{prop:bootstrap} Let $T>0$ be such that $|\tilde{\delta}_0| e^{k_\infty T}\le \eps_0$. Then for any $t\in [0, T]$, we have the bounds 
 \begin{equation}\label{eqn:bootstr}
|\delta(t)|\simeq  |\tilde{\delta}_0| e^{k_\infty t},\,\|\tilde{\eta}(t, \cdot)\|_{L_x^2}+ \|\nabla_x\tilde{\eta}(t, \cdot)\|_{L_x^2} + \|\nabla_x^2\tilde{\eta}(t, \cdot)\|_{L_x^2}\ll |\tilde{\delta}_0| e^{k_\infty t}
 \end{equation}
 for some fixed large $M$.  Also, $\delta(t)$ has the same sign as $\tilde{\delta}_0$. 
 \end{prop}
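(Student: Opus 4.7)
The argument is a standard continuity bootstrap on the pair $(|\delta(t)|,\|\tilde\eta(t)\|_{H^2})$. From the initial data $\eta(0)=\tilde\delta_0 g_\infty$, $\partial_t\eta(0)=0$, one reads off $\delta(0)=\tilde\delta_0$, $\dot\delta(0)=0$, $\tilde\eta(0)=\partial_t\tilde\eta(0)=0$, so the target bounds hold at $t=0$ with room to spare. Let $T_*\le T$ be maximal with $|\delta(t)|\le 2|\tilde\delta_0|e^{k_\infty t}$ and $\|\tilde\eta(t)\|_{H^2}\le \eps_0^{1/2}|\tilde\delta_0|e^{k_\infty t}$ on $[0,T_*]$; the plan is to strictly improve both bounds, which by continuity forces $T_*=T$.

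For $\delta$, projecting \eqref{eqn:eta} onto $g_\infty$ yields the scalar ODE
\[
\ddot\delta-k_\infty^2\delta=\|g_\infty\|_2^{-2}\langle F(t),g_\infty\rangle,
\]
whose solution is $\delta(t)=\tilde\delta_0\cosh(k_\infty t)+\|g_\infty\|_2^{-2}\int_0^t k_\infty^{-1}\sinh(k_\infty(t-s))\langle F(s),g_\infty\rangle\,ds$. Using the explicit form \eqref{F}, the decay \eqref{eq:u_*bounds1}--\eqref{eq:abounds}, and the Schwartz character of $g_\infty$, I bound the linear-in-$\eta$ part of $\langle F,g_\infty\rangle$ by $\delta\langle s\rangle^{-1}\|\eta(s)\|_{L^2}$ and the higher-order part by $\|\eta(s)\|_{H^2}^2$ via $H^2\hookrightarrow L^\infty$. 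Under the bootstrap $\|\eta(s)\|_{H^2}\lesssim |\tilde\delta_0|e^{k_\infty s}$, the Duhamel correction is dominated by $\delta|\tilde\delta_0|e^{k_\infty t}\log(1+t)/k_\infty+(|\tilde\delta_0|e^{k_\infty t})^2/k_\infty^2$, which is $\ll|\tilde\delta_0|e^{k_\infty t}$ in the hierarchy $\eps_*\ll\eps_0\ll 1$. Thus $\delta(t)\simeq\tilde\delta_0 e^{k_\infty t}$ for $t\gtrsim 1/k_\infty$ and $\mathrm{sign}(\delta(t))=\mathrm{sign}(\tilde\delta_0)$.

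For $\tilde\eta$ I use the projected wave equation $\partial_{tt}\tilde\eta+\mathcal{H}_\infty\tilde\eta=P_{g_\infty^\perp}F$ with vanishing Cauchy data and run an $H^2$ energy estimate. Since $\mathcal{H}_\infty|_{g_\infty^\perp}$ is nonnegative with only the scaling zero mode $\Lambda W_{a(\infty)}$ in its kernel, the free propagator on $g_\infty^\perp$ grows at most linearly in $t$ (bounded on the strictly positive spectral part, linear on the zero mode). Combining Duhamel with the source bound $\|P_{g_\infty^\perp}F(s)\|_{L^2}\lesssim \delta\langle s\rangle^{-1}\|\eta(s)\|_{L^2}+\|\eta(s)\|_{H^2}^2$ --- together with analogous estimates at the $\nabla$ and $\nabla^2$ levels obtained by commuting derivatives through (modulo lower-order, spatially localized commutators) and applying the $H^2$-algebra property to the quintic part --- yields $\|\tilde\eta(t)\|_{H^2}\ll|\tilde\delta_0|e^{k_\infty t}$ on $[0,T_*]$. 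Combined with the $\delta$ estimate this closes the bootstrap.

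The main obstacle is the merely $\langle t\rangle^{-1}$ time decay of the linear-in-$\eta$ coefficients in $F$, inherited from $\|u_*\|_{L^\infty_x}\lesssim \delta\langle t\rangle^{-1}$ and $|a(t)-a_\infty|\lesssim \delta\langle t\rangle^{-1}$. This decay is non-integrable, so every Duhamel integral picks up a factor $\log(1+T)$; since $T\lesssim \log(\eps_0/|\tilde\delta_0|)/k_\infty$ may be very large, one must carefully verify that $\delta\log(1+T)$ remains small uniformly in $\tilde\delta_0$, which is what dictates the hierarchy $\eps_*\ll\eps_0\ll 1$ and forces one to extract the decay of $\langle F,g_\infty\rangle$ and $P_{g_\infty^\perp}F$ from the full spatial localization of $g_\infty,W,\Lambda W$ rather than from crude $L^\infty$ bounds on $u_*$ alone. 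The scaling zero mode of $\mathcal{H}_\infty$ inside $g_\infty^\perp$ contributes only polynomial-in-$t$ growth that is absorbed by $e^{k_\infty t}$, and is a subsidiary technicality compared to this time-decay issue.
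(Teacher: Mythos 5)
Your calculation correctly isolates where the difficulty lies, namely the growing exponential for $n_+(t)$, with a source of the schematic form $\langle 20 u_* W^3\eta, g_\infty\rangle$ that, under the bootstrap ansatz and the \emph{pointwise} decay $\|u_*(t)\|_{L^\infty}\lesssim\delta\langle t\rangle^{-1}$, only yields $\delta\langle s\rangle^{-1}|\tilde\delta_0|e^{k_\infty s}$. You also correctly compute that the forward Duhamel kernel $e^{k_\infty(t-s)}$ then produces a factor $\log(1+t)$. But your key claim --- that ``$\delta\log(1+T)$ remains small uniformly in $\tilde\delta_0$'' once one respects the hierarchy $\eps_*\ll\eps_0\ll1$ --- is false, and this is exactly where the argument breaks. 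Here $\delta$ (and $\eps_*$) are \emph{fixed}, while $T\simeq k_\infty^{-1}\log(\eps_0/|\tilde\delta_0|)$ blows up as $|\tilde\delta_0|\to0$, a regime explicitly allowed in Theorem~\ref{thm:Main}. So $\delta\log(1+T)\to\infty$ and there is no choice of constants that saves the bootstrap for $\delta(t)$; the claimed improvement $|\delta(t)|\simeq|\tilde\delta_0|e^{k_\infty t}$ degrades to nothing. You gesture toward extracting better decay from spatial localization, but the $\langle t\rangle^{-1}$ local decay for the wave flow with a zero-energy resonance is essentially sharp pointwise, so localization alone cannot upgrade it to an integrable rate.

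The missing ingredients are two. First, the paper does not estimate the linear-in-$\eta$ term as a source at all; instead it moves the terms $\frac{n_+}{(2k_\infty)^{1/2}}\langle g_\infty\big(20u_*W_{a(\infty)}^3+(a(\infty)-a(t))\partial_\lambda V\big),g_\infty\rangle$ into the \emph{linear part} of the ODE for $n_+$, producing an integrating factor $e^{\Gamma(s,t)}$ rather than an integrated error; see~\eqref{eq:n+eqn}--\eqref{eq:n+duhamel}. Second, uniform boundedness of $\Gamma(s,t)$ is then reduced not to a pointwise decay rate but to the \emph{averaged} dispersive bound $\|u_*\|_{L_x^\infty L_t^1}\ll1$ (Proposition~\ref{prop:fullextradisp}) together with Lemma~\ref{lem:a(t)keybound} controlling $\int_0^t(a(\infty)-a(s))\,ds$. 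These are genuinely new estimates, not contained in~\cite{KS}, and Section~3 of the paper is devoted to proving the underlying $L^\infty_x L^1_t$ propagator bounds (Proposition~\ref{prop:extradisp}) via resolvent expansions and the low-energy resonance structure. Without this, the bootstrap cannot close. A subsidiary inaccuracy: you describe $\Lambda W_{a(\infty)}$ as a zero mode of $\mathcal{H}_\infty$ in $g_\infty^\perp$; it is in fact a zero-energy \emph{resonance} (it is not in $L^2$), and in the paper's $H^2$ bootstrap for $\tilde\eta$ this is handled by the crude estimate~\eqref{eq:basic}, not by spectral decomposition.
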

 \begin{proof}[Proof of Proposition~\ref{prop:bootstrap}] Recall that
 \[
 F(t) =  N(u_*+\eta, W_{a(t)}) - N(u_{*}, W_{a(t)}) + \big(\calH_\infty - \calH(a(t))\big)\eta
\]
Then according to Section~3 in \cite{KS}, we can write 
\EQ{\label{eq:delta}
\delta(t) &= (2k_\infty)^{-\frac{1}{2}}[n_+(t) + n_-(t)],\\
n_{\pm}(t)  &= (\frac{k_\infty}{2})^{\frac{1}{2}}\tilde{\delta}_0e^{\pm k_\infty t} + \int_0^t e^{\pm k_\infty(t-s)}\langle F(s), g_\infty\rangle\,ds
}
Moreover, we have the Duhamel-type formula
\begin{equation}\label{eq:tildeeta}
\tilde{\eta}(t, \cdot) = -\int_0^t\frac{\sin[(t-s)\sqrt{\calH_\infty}]}{\sqrt{\calH_\infty}}P_{g_\infty^{\perp}}F(s)\,ds
\end{equation}
Assume that the solution exists on some interval $[0, \tilde{T})$, $\tilde{T}\leq T$, and that it satisfies the following estimates, which we refer to as 
{\bf{bootstrap assumptions}}: 
 \EQ{\label{eqn:bootstrAss}
|\delta(t)| &\leq 10|\tilde{\delta}_0| e^{k_\infty t} \\
\|\tilde{\eta}(t, \cdot)\|_{L_x^2}+ \|\nabla_x\tilde{\eta}(t, \cdot)\|_{L_x^2} + \|\nabla_x^2\tilde{\eta}(t, \cdot)\|_{L_x^2} &\leq \frac{2}{K}|\tilde{\delta}_0| e^{k_\infty t}
}
 for some large $K$, which will be chosen to depend on $\eps_0$. 
 
 We shall now infer that $|\delta(t)|\simeq |\tilde{\delta}_0| e^{k_\infty t}$ with a proportionality factor in $[\frac{1}{4},4]$ and we will improve the second inequality by replacing $\frac{2}{K}$ by~$\frac{1}{K}$. A standard continuity argument then implies Proposition~\ref{prop:bootstrap}. 
 
 \bigskip
 
 {\it{{\bf (A)} Improving the bound on $\tilde{\eta}$.}} We start with the $L_x^2$-norm. To control it, we use the simple bound 
  \EQ{\label{eq:basic}
 \Big \|\frac{\sin(t\sqrt{\calH_{\infty}})}{\sqrt{\calH_{\infty}}}P_{g_{\infty}^{\perp}}f \Big\|_{L_x^2} &=  \Big \| \int_0^t  \cos(s\sqrt{\calH_{\infty}}) \, ds\, P_{g_{\infty}^{\perp}}   f \Big\|_{L_x^2} \\
 & \lesssim  |t|\|f\|_{L_x^2}
 }
 Assume that we have the bound 
\begin{equation}\label{eq:crudeF}
\|F(s, \cdot)\|_{L_x^2}\ll \frac{|\tilde{\delta}_0|}{K} e^{k_\infty s}
\end{equation}
Then  \eqref{eq:basic} implies
\begin{align*}
&\Big\|\int_0^t\frac{\sin[(t-s)\sqrt{\calH_\infty}]}{\sqrt{\calH_\infty}}P_{g_\infty^{\perp}}F(s)\,ds\Big\|_{L_x^2}\\&\ll \frac{|\tilde{\delta}_0|}{K}\int_0^t (t-s)e^{k_\infty s}ds
\less \frac{|\tilde{\delta}_0|}{K}e^{k_\infty t}
\end{align*}
which recovers the dispersive type bound for $\tilde{\eta}$. The above bound~\eqref{eq:crudeF} for $F$ can be easily proved: for the difference 
\[
 N(u_*+\eta, W_{a(t)}) - N(u_{*}, W_{a(t)})
 \]
 it suffices to consider the ``extreme'' terms 
 \begin{equation}\label{eq:list1}
 u_* W_{a(t)}^3\eta, \quad u_*^4 \eta,\quad  u^3\eta^2, \quad \eta^5,
 \end{equation}
 see~\eqref{F}.   
 We now check~\eqref{eq:crudeF} for each of these expressions, bounding $\eta$ as in~\eqref{etazerl} via~\eqref{eqn:bootstrAss} as follows:
 \[
 \|\eta(t, \cdot)\|_{L_x^2}+ \|\nabla_x{\eta}(t, \cdot)\|_{L_x^2} + \|\nabla_x^2{\eta}(t, \cdot)\|_{L_x^2} \le C_1 |\tilde{\delta}_0| e^{k_\infty t}
 \]
 with an absolute constant $C_1$. In what follows, we will need to ensure that $\eps_0\ll K^{-1}$ (so that also $\de\ll K^{-1}$). 
 
 \medskip
 
For the first term in \eqref{eq:list1}, we get 
\begin{align*}
\big\|u_* W_{a(t)}^3\eta\big\|_{L_x^2}\lesssim \|u_*\|_{L_x^\infty}\|W_{a(t)}^3\|_{L_x^\infty} \|\eta\|_{L_x^2}\ll \frac{|\tilde{\delta}_0|}{K}\langle t\rangle^{-1}e^{k_\infty t}
\end{align*}
For the second term in \eqref{eq:list1}, we get 
\begin{align*}
\big\|u_*^4 \eta\big\|_{L_x^2}\lesssim \|u_*\|_{L_x^\infty}^4\|\eta\|_{L_x^2}\ll \frac{|\tilde{\delta}_0|}{K}\langle t\rangle^{-4}e^{k_\infty t}
\end{align*}
For the third term in \eqref{eq:list1}, use that $H^2(\R^3)\subset L^\infty$ to obtain the bound 
\begin{align*}
\big\|u^3\eta^2\big\|_{L_x^2}\lesssim \|u^3\|_{L_x^\infty}\|\eta\|_{L_x^\infty}\|\eta\|_{L_x^2}\lesssim \tilde{\delta}_0^2e^{2k_\infty t}\ll \frac{|\tilde{\delta}_0|}{K}e^{k_\infty t}
\end{align*}
For the last term in \eqref{eq:list1}, we similarly obtain 
\begin{align*}
\big\|\eta^5\big\|_{L_x^2}\lesssim \|\eta\|_{L_x^\infty}^4\|\eta\|_{L_x^2}\lesssim |\tilde{\delta}_0|^5e^{5k_\infty t}\ll \frac{|\tilde{\delta}_0|}{K}e^{k_\infty t}
\end{align*}

In order to complete the proof of the bound \eqref{eq:crudeF}, it remains to control the term 
\[
\big(\calH_\infty - \calH(a(t))\big)\eta
\]
Due to the fast decay rate ($\simeq  \langle x\rangle^{-4}$) of the potential $V= -5W_{a(t)}^4$, one easily infers 
\[
\|\big(\calH_\infty - \calH(a(t))\big)\eta\|_{L_x^2}\lesssim |a(\infty) - a(t)||\tilde{\delta}_0|e^{k_\infty t}\ll \frac{|\tilde{\delta}_0|}{K}\langle t\rangle^{-1}e^{k_\infty t}
\]
This completes the bootstrap for the norm $\|\tilde{\eta}\|_{L_x^2}$. 

\bigskip

Next, consider the norm $\|\nabla\tilde{\eta}\|_{L_x^2}$. To control it, we use~\cite[eq.~(36)]{KS} with $V=-5W(a(\I))^4$: 
\begin{align*}
\|\nabla\tilde{\eta}\|_{L_x^2}&\leq \|\sqrt{\calH_\infty} \,\tilde{\eta}\|_{L_x^2} + \| |V|^{\frac{1}{2}}\tilde{\eta}\|_{L_x^2}\\
&\leq \int_0^t \|F(s, \cdot)\|_{L_x^2}\,ds + \||V|^{\frac{1}{2}}\|_{L_x^\infty}\|\tilde{\eta}\|_{L_x^2}\\
&\ll \frac{|\tilde{\delta}_0|}{K}e^{k_\infty t}
\end{align*}
Finally, we consider $\|\nabla_x^2\tilde{\eta}\|_{L_x^2}$:
\begin{align*}
\|\nabla^2 \tilde{\eta}\|_{L_x^2}&\leq \| \calH_\infty  \,\tilde{\eta}\|_{L_x^2} + \| V \tilde{\eta}\|_{L_x^2}\\
&\leq \int_0^t \| \sqrt{\calH_\I} P_{g_\I^\perp}F(s, \cdot)\|_{L_x^2}\,ds + \|V \|_{L_x^\infty}\|\tilde{\eta}\|_{L_x^2}
\end{align*}
The final term here is $\ll \frac{|\tilde{\delta}_0|}{K}e^{k_\infty t}$ as desired, and for the integral we continue using~\cite[eq.~(35)]{KS}:
\EQ{
\int_0^t \| \sqrt{\calH_\I} P_{g_\I^\perp}F(s, \cdot)\|_{L_x^2}\,ds \less \int_0^t \| \nabla F(s, \cdot)\|_{L_x^2}\,ds
}
To bound the integral on the right, we again consider the terms in \eqref{eq:list1}. For the first of these, we have 
\begin{align*}
&\big\|\nabla_x\big(u_{*}W^3_{a(t)}\eta\big)\big\|_{L_x^2}\\&\lesssim \|\nabla_x u_*\|_{L_x^2+L_x^\infty}\|W^3_{a(t)}\|_{L_x^\infty\cap L_x^{2}}\|\eta\|_{L_x^\infty} + \|u_*\|_{L_x^\infty}\|\nabla_x(W^3_{a(t)})\|_{L_x^\infty}\|\eta\|_{L_x^2}\\
&+\|u_*\|_{L_x^\infty}\|W^3_{a(t)}\|_{L_x^\infty}\|\nabla_x\eta\|_{L_x^2}\\
&\ll\frac{|\tilde{\delta}_0|}{K}\langle t\rangle^{-\eps}e^{k_\infty t} + 
\frac{|\tilde{\delta}_0|}{K}\langle t\rangle^{-1}e^{k_\infty t}
\end{align*}
For the second term in \eqref{eq:list1}, we obtain the contribution 
\begin{align*}
\big\|\nabla_x\big(u_*^4\eta\big)\big\|_{L_x^2}\lesssim \|\nabla_x u_*\|_{L_x^2}\|u_*^3\|_{L_x^\infty}\|\eta\|_{L_x^\infty} + \|u_*^4\|_{L_x^\infty}\|\nabla_x\eta\|_{L_x^2}\ll \frac{|\tilde{\delta}_0|}{K}\langle t\rangle^{-3}e^{k_\infty t}
\end{align*}
For the last two terms of \eqref{eq:list1}, we have the bounds 
\begin{align*}
\big\|\nabla_x\big(u^3\eta^2\big)\big\|_{L_x^2}&\lesssim \|\nabla_x(u^3)\|_{L_x^2}\|\eta\|_{L_x^\infty}^2 + \|u^3\|_{L_x^\infty}\|\nabla_x\eta\|_{L_x^2}\|\eta\|_{L_x^\infty}\\
&\lesssim \tilde{\delta}_0^2 e^{2k_\infty t}\ll \frac{|\tilde{\delta}_0|}{K}e^{k_\infty t}
\end{align*}
\begin{align*}
\big\|\nabla_x(\eta^5)\big\|_{L_x^2}\lesssim \|\nabla_x\eta\|_{L_x^2}\|\eta^4\|_{L_x^\infty}\lesssim |\tilde{\delta}_0|^5 e^{k_\infty t}\ll \frac{| \tilde{\delta}_0|}{K}e^{k_\infty t}
\end{align*}
Finally, one also easily checks that 
\begin{align*}
\big\|\nabla_x\big((\calH_\infty - \calH(a(t)))\eta\big)\big\|_{L_x^2}\lesssim |a(\infty) - a(t)||\tilde{\delta}_0|e^{k_\infty t}\ll \frac{|\tilde{\delta}_0|}{K}\langle t\rangle^{-1}e^{k_\infty t}
\end{align*}

Before continuing, we make the following important observation from the proof:
\begin{cor}\label{cor:betterbound} The bootstrap assumption implies that we can write for $j = 0,1,2$ 
\[
\nabla_x^j\tilde{\eta}(t) = \tilde{\eta}_1^{(j)} + \tilde{\eta}_2 ^{(j)}
\]
where we have 
\EQ{
&\|\tilde{\eta}_1^{(j)}(t, \cdot)\|_{L_x^2}\ll \frac{|\tilde{\delta}_0|}{K}\langle t\rangle^{-\eps}e^{k_\infty t}\\
&\|\tilde{\eta}_2^{(j)}(t, \cdot)\|_{L_x^2}\ll |\tilde{\delta}_0|^2e^{2k_\infty t}
}
\end{cor}

This corollary is important since it shows that the interactions of $\tilde{\eta}$ with itself as well as with the driving term $u_*$ are much weaker than the principal unstable component of~$\eta$, i.e., $\delta(t)$.  We will have to take advantage of this improved bound in order to control  the evolution of~$\de(t)$. 

\bigskip

{\it{{\bf (B)} Improving the control over $\delta(t)$.}}
In order to complete the bound on $\eta$, we next need to control the growth of the coefficients $n_{\pm}(t)$. This appears more difficult due to the quadratic interactions in $F(s, \cdot)$ of the form $u_{*}\eta W_{a(t)}^3$. The issue here is that the dispersive bound for $u_{*}$ only gives $\langle t\rangle^{-1}$ decay, which just fails to be integrable. 
\\
We start by deducing an improved bound for $n_-(t)$ departing from our bootstrap assumption. In view of~\eqref{eq:delta} we have  
\[
n_-(t) = (\frac{k_\infty}{2})^{\frac{1}{2}}\tilde{\delta}_0e^{-k_\infty t} + \int_0^t e^{-k_\infty (t-s)}\langle F(s, \cdot), g_\infty\rangle\,ds
\]
Using the bound \eqref{eq:crudeF} with the improvement implied by Corollary~\ref{cor:betterbound},  we get the bound 
\begin{equation}\label{eq:n-bound}
|n_-(t)|\less |\tilde{\delta}_0|\langle t\rangle^{-\frac{\eps}{2}}e^{k_\infty t} + \tilde{\delta}_0^2 e^{2k_\infty t}
\end{equation}
We now use this, together with Corollary~\ref{cor:betterbound} as well as the a priori bounds on $u_{*}$, to derive the improved control over~$n_+(t)$. We depart from the differential equation 
\EQ{\label{eq:n+eqn}
&\dot{n}_+(t) - k_\infty n_+(t) \\
& = \frac{n_+(t)}{(2k_\infty)^{\frac{1}{2}}}\langle g_\infty \big(20u_{*}W_{a(\infty)}^3 + (a(\infty) - a(t))\partial_{\lambda}V|_{\lambda = a(\infty)}\big), g_\infty\rangle    + F_+(t),
}
where we use the notation $V_{\lambda}: = -5W_{\lambda}^4$ and
\begin{align*}
 F_+(t) &= \frac{n_-(t)}{(2k_\infty)^{\frac{1}{2}}}\langle 20 u_{*}g_\infty W_{a(t)}^3, g_\infty \rangle + \langle 20 u_{*}\tilde{\eta}W_{a(t)}^3, g_\infty\rangle\\
 & \quad + 
 \frac{n_+(t)}{(2k_\infty)^{\frac{1}{2}}}\langle g_\infty\big(V_{a(\infty)} - V_{a(t)} -  (a(\infty) - a(t))\partial_{\lambda}V|_{\lambda = a(\infty)}\big), g_{\infty}\rangle \\
 &\quad 
 + G_+(t)
\end{align*}
with
\begin{align*}
G_+(t) &=  \frac{n_+(t)}{(2k_\infty)^{\frac{1}{2}}}\langle 20 u_{*}g_\infty ( W_{a(t)}^3 - W_{a(\infty)}^3), g_\infty \rangle \\
&\quad +\langle  N(u_*+\eta, W_{a(t)}) - N(u_{*}, W_{a(t)}) -  20\delta(t) u_{*}g_\infty W_{a(t)}^3, g_\infty\rangle\\
&\quad  + \langle \big(\calH_\infty - \calH(a(t))\big)[\tilde{\eta}+(2k_\infty)^{-\frac{1}{2}} {n_-(t)}g_{\infty}], g_\infty\rangle
\end{align*}
We infer from \eqref{eq:n+eqn} that 
\begin{equation}\label{eq:n+duhamel}\begin{split}
&n_+(t) = (\frac{\tilde{\delta}_0}{2})^{\frac{1}{2}}e^{k_\infty t +\Gamma(0,t)} + \int_0^t e^{k_\infty(t-s) + \Gamma(s,t)}F_+(s)\,ds
\end{split}\end{equation}
where we use the notation 
\[
\Gamma(s,t): = \int_s^t \langle g_\infty \big(u_{*}(s_1,\cdot) W_{a(\infty)}^3 + (a(\infty) - a(s_1))\partial_{\lambda}V|_{\lambda = a(\infty)}\big), g_\infty\rangle\,ds_1
\]
In order to proceed, we shall  obtain uniform bounds on the phase function~$\Gamma(s, t)$. These hinge   on Proposition~\ref{prop:fullextradisp}, to be proved in the next section. This proposition   implies that 
\EQ{\label{supst}
\sup_{s,t>0}\Big|\int_s^t \langle g_\infty \big(u_{*}(s_1,\cdot) W_{a(\infty)}^3, g_\infty\rangle\,ds_1\Big|\lesssim \|u_*\|_{L_x^\infty L_t^1}\ll 1
}
It remains to estimate 
\EQ{\label{aav}
\sup_{s,t} \int_s^t(a(\infty) - a(s_1))\,ds_1
}
Note that the integrand decays like $s_1^{-1}$ from the bounds in~\cite{KS}, which is no integrable. Lemma~\ref{lem:a(t)keybound}  shows nevertheless that~\eqref{aav} is
uniformly bounded. This again hinges on Proposition~\ref{prop:fullextradisp}. 

\begin{lem}\label{lem:a(t)keybound} We have the averaged estimate
\[
\sup_{t>0}\Big|\int_0^t(a(\infty) - a(s))\,ds\Big|\ll 1
\]
\end{lem}
\begin{proof}
Here we use the equation defining $a(t)$ in \cite{KS}, given by (51) in loc.\ cit., which we copy here for $t\gtrsim 1$: 
\begin{align*}
\dot{a}(t) = -c_0\big(\frac{a(t)}{a(\infty)}\big)^{\frac{5}{4}}\langle \partial_{\lambda}W_{\lambda}|_{\lambda = a(\infty)}, (V_{a(\infty)} - V_{a(t)})u_*(t, \cdot) + N(u_*(t, \cdot), W_{a(t)})\rangle
\end{align*}
We write this equation somewhat schematically in the form 
\begin{align*}
\dot{a}(t) &= -c_0(a(\infty) - a(t))\langle \partial_{\lambda}W_{\lambda}|_{\lambda = a(\infty)}, u_*(t, \cdot)\partial_{\lambda}V_{\lambda}|_{\lambda = a(\infty)}\rangle\\
&+ O(|a(\infty) - a(t)|^2\langle |\partial_{\lambda}W_{\lambda}|_{\lambda = a(\infty)}|, |u_*(t, \cdot)|\langle x\rangle^{-4}\rangle)\\
&- c_0\big(\frac{a(t)}{a(\infty)}\big)^{\frac{5}{4}}\langle \partial_{\lambda}W_{\lambda}|_{\lambda = a(\infty)}, N(u_*(t, \cdot), W_{a(t)})\rangle
\end{align*}
Set $\al(t):=a(\I)-a(t)$, and write this ODE in the form
\EQ{\label{aldot}
\dot\al &= -\al\,\sigma - H  \\
\sigma(t) &= -c_0 \langle \partial_{\lambda}W_{\lambda}|_{\lambda = a(\infty)}, u_*(t, \cdot)\partial_{\lambda}V_{\lambda}|_{\lambda = a(\infty)}\rangle \\
H(t): &= O(|a(\infty) - a(t)|^2\langle |\partial_{\lambda}W_{\lambda}|_{\lambda = a(\infty)}|, |u_*(t, \cdot)|\langle x\rangle^{-4}\rangle)\\
&- c_0\big(\frac{a(t)}{a(\infty)}\big)^{\frac{5}{4}}\langle \partial_{\lambda}W_{\lambda}|_{\lambda = a(\infty)}, N(u_*(t, \cdot), W_{a(t)})\rangle
}
Solving from $t=\I$ one obtains 
\EQ{\label{asol}
\al(t) = \int_t^\I e^{\int_t^s \sigma}\; H(s)\, ds
}
Proposition~\ref{prop:fullextradisp} implies that 
\[
\sup_{s,t}\Big| \int_t^s \sigma \Big| \ll 1
\]
which ensures that $e^{\int_t^s \sigma} = O(1)$ uniformly in~$s,t$. 
We now claim that 
\EQ{\label{intal}
\int_0^t (a(\infty) - a(\tilde{t}))\,d\tilde{t} &= t\int_t^\infty e^{ \int_t^s\sigma } \; H(s)\,ds \\
&\qquad+\int_0^t  s\sigma(s)  \int_s^\I  e^{ \int_s^{\tilde s} \sigma} \; H(\tilde s)\,d\tilde s       \,ds
}
To verify this, note first that both sides vanish at $t=0$. Furthermore, taking a derivative in~$t$ reduces the equation to~\eqref{asol}. 

One has the bound
\EQ{\label{Hbd}
|H(t)|\lesssim \langle u_*^2, \langle x\rangle^{-4}\rangle + \delta \langle t\rangle^{-3}
}
with $0<\delta\ll 1$.  Therefore,  on the one hand, 
\[
\sup_{t>0}\big|t\int_t^\infty  e^{ \int_t^s\sigma } \;
H(s)\,ds\big|  \ll 1
\]
On the other hand,  $\sup_{s\ge0} |s\sigma(s)|\ll 1$ whence 
\EQ{\label{Hinttransf}
\Big| \int_0^t  s\sigma(s)  \int_s^\I  e^{ \int_s^{\tilde s} \sigma} \; H(\tilde s)\,d\tilde s       \,ds \Big| &\less  \int_0^t \int_s^\I |H(\tilde s)|\, d\tilde s       \,ds \\
&  = t \int_t^\I |H(\tilde s)|\, d\tilde s + \int_0^t s  |H( s)|     \,ds
}
The first term is $\ll1$ from~\eqref{Hbd}, whereas the second integral is dominated by 
\[
\sup_{t>0}\Big |\int_0^t s |H(s)|\,ds\Big|\lesssim \sup_{s>0}\|s u_*(s, \cdot)\|_{L_x^\infty}\|u_{*}\|_{L_x^\infty L_s^1} +\delta\ll 1
\]
In conclusion~\eqref{Hinttransf} is $\ll1$ which completes the proof of the lemma. 
\end{proof}

In conjunction with~\eqref{Hinttransf} the lemma  implies that the phase corrections $\Gamma(s, t)$ are uniformly small. 

We next estimate the contributions of the various constituents of $F_+(s, \cdot)$ to the integral in~\eqref{eq:n+duhamel}. This will then lead to the completion of  the proof of Proposition~\ref{prop:bootstrap}. 

\medskip

{\it{{\bf (1)}  The contribution of $ \frac{n_-(t)}{(2k_\infty)^{\frac{1}{2}}}\langle u_{*}g_\infty W_{a(t)}^3, g_\infty\rangle + \langle u_{*}\tilde{\eta}W_{a(t)}^3, g_\infty\rangle$.}} 

Using \eqref{eq:n-bound} as well as Corollary~\ref{cor:betterbound}, we bound this by 
\begin{align*}
&\ll \int_0^t e^{k_\infty(t-s) +\Gamma(s,t)}\langle s\rangle^{-1}[|\tilde{\delta}_0|\langle s\rangle^{-\frac{\eps}{2}}e^{k_\infty s} + \tilde{\delta}_0^2 e^{2k_\infty s}]\,ds\\
&\lesssim |\tilde{\delta}_0|e^{k_\infty t} + \tilde{\delta}_0^2 e^{2k_\infty t}
\end{align*}

\medskip

{\it{{\bf (2)} The contribution of $ \frac{n_+(t)}{(2k_\infty)^{\frac{1}{2}}}\langle g_\infty\big(V_{a(\infty)} - V_{a(t)} -  (a(\infty) - a(t))\partial_{\lambda}V|_{\lambda = a(\infty)}\big), g_{\infty}\rangle$.}}

We can bound this by 
\begin{align*}
\lesssim |\tilde{\delta}_0|\int_0^t e^{k_\infty(t-s) +\Gamma(s,t)}e^{k_{\infty}s}|a(\infty) - a(s)|^2\,ds\ll |\tilde{\delta}_0|e^{k_{\infty}t}
\end{align*}

\bigskip 
We next consider the contributions of the constituents of $G_+(t)$:

\smallskip 

{\it{{\bf (3)} The contribution of $\frac{n_+(t)}{(2k_\infty)^{\frac{1}{2}}}\langle 20 u_{*}g_\infty (W_{a(t)}^3-W_{a(0)}^3), g_\infty\rangle$.}} 

Use the bound 
\[
\big|n_+(t)\langle u_{*}g_\infty (W_{a(t)}^3-W_{a(\infty)}^3), g_\infty\rangle \big|\ll \langle t\rangle^{-1}|a(\infty) - a(t)| |n_+(t)|
\]
Hence the corresponding contribution is bounded by 
\begin{align*}
&\ll \int_0^t e^{k_\infty(t-s) + \Gamma(s,t)} \langle s\rangle^{-1}|a(\infty) - a(s)| |n_+(s)|\,ds\\
&\ll \int_0^t e^{k_\infty (t-s) + \Gamma(s,t)}\langle s\rangle^{-2}|\tilde{\delta}_0| e^{k_\infty s}\,ds\lesssim |\tilde{\delta}_0|e^{k_\infty t}
\end{align*}
where we have used the bound \eqref{eq:n-bound} as well as the bootstrap assumption to control~$n_+(t)$. 

\medskip 

{\it{{\bf (4)}  The contribution of $\langle  N(u_*+\eta, W_{a(t)}) - N(u_{*}, W_{a(t)}) -  20\delta(t) u_{*}g_\infty W_{a(t)}^3, g_\infty\rangle$.}}

Here we need to estimate the contributions of the following schematically written terms: 
\begin{equation}\label{eq:list2}
\langle u_*\tilde{\eta}W_{a(t)}^3, g_\infty\rangle,\; \langle \eta^2W_{a(t)}^3, g_\infty\rangle, \; \langle \eta u_*^4, g_\infty\rangle, \; \langle \eta^5, g_\infty\rangle
\end{equation}
For the first term, we can bound the contribution by 
\begin{align*}
\ll \int_0^t e^{k_\infty(t-s) +\Gamma(s,t)}(\langle s\rangle^{-1-\frac{\eps}{2}}|\tilde{\delta}_0| e^{k_\infty s} + \tilde{\delta}_0^2 \langle s\rangle^{-1}e^{2k_\infty s})\,ds\lesssim|\tilde{\delta}_0| e^{k_\infty t}
\end{align*}
The remaining terms are handled similarly. 

\medskip

{\it{{\bf (5)}  The contribution of $ \langle \big(\calH(a(\infty)) - \calH(a(t))\big)[\tilde{\eta} + (2k_\infty)^{-\frac{1}{2}} {n_-(t)}g_\infty], g_\infty\rangle$. }} 

Using \eqref{eq:n-bound} and 
Corollary~\ref{cor:betterbound}, we bound the corresponding contribution by the exact same expression as in {\it{(4)}}.

\smallskip

This completes the proof of Proposition~\ref{prop:bootstrap}. 
 \end{proof}

\medskip

It remains to prove Proposition~\ref{prop:key}. Thus fix a time $T$ with $1\gg |\tilde{\delta}_0|e^{k_\infty T}\gg \eps_*$ where we can write 
\[
\tilde{u}(T, \cdot) = W_{a(T)} + u_{*} + \eta
\]
as before. We need to pass to a representation 
\begin{equation}\label{eq:decomp3}
\tilde{u}(T, \cdot)  = W_{\alpha_T} + \tilde{v}_{\alpha_T}
\end{equation}
which satisfies $\langle  \tilde{v}_{\alpha_T}, \Lambda^*g_{\alpha_T}\rangle = 0$. From \cite{KS} we can write 
\[
u_*(t, \cdot) = P_{g_{\infty}^{\perp}}u_{*} + \delta_*(t)g_{\infty},\,|\delta_*(t)|\lesssim C(\eps_*)\langle t\rangle^{-1}
\]
In order to obtain the desired decomposition \eqref{eq:decomp3}, we need to satisfy the relation 
\begin{equation}\label{eq:keyortho}
\langle P_{g_{\infty}^{\perp}}(u_{*} + \eta) + (\delta(T) + \delta_*(T))g_{\infty} + W_{a(T)} - W_{\alpha_T}, \Lambda^*g_{\alpha_T}\rangle = 0
\end{equation}
Observe that 
\[
W_{a(T)} - W_{\alpha_T} = (a(T) -\alpha_T)\partial_\lambda W_{\lambda}|_{\lambda = a(T)} + O(|a(T) -\alpha_T|^2)
\]
and from (2.13) in \cite{CNLW1} we have 
\[
|\langle \partial_\lambda W_{\lambda}|_{\lambda = a(T)}, \Lambda^*g_{a(T)}\rangle|\simeq  1
\]
It follows that for $|a(T) -\alpha_T|\ll 1$ there is a unique solution of \eqref{eq:keyortho} which satisfies 
\[
|a(T) -\alpha_T|\lesssim |\tilde{\delta}_0| e^{k_\infty T}\ll 1
\]
To verify the condition \eqref{eqn:keygrowth}, we need to compute 
\begin{equation}
\langle  P_{g_{\infty}^{\perp}}(u_{*} + \eta) + (\delta(T) + \delta_*(T))g_{\infty} + W_{a(T)} - W_{\alpha_T}, g_{\alpha_T}\rangle
\end{equation}
From Proposition~\ref{prop:bootstrap} we have 
\[
|\delta(T)|\gg |\langle  P_{g_{\infty}^{\perp}}(u_{*} + \eta), g_{\alpha_T}\rangle| + |\delta_*(T)|,
\]
and furthermore
\[
|\langle W_{a(T)} - W_{\alpha_T}, g_{\alpha_T}\rangle| = O(|a(T) -\alpha_T|^2)\ll |\tilde{\delta}_0| e^{k_\infty T}\simeq  |\delta(T)|
\]
We have now proved the key growth condition 
\[
\langle \tilde{v}_{\alpha_T}, g_{\alpha_T}\rangle \simeq  \tilde{\delta}_0 e^{k_\infty T}
\]
which completes the proof of Proposition~\ref{prop:key}.

\section{Proof of the  dispersive estimate on $\|u_{*}\|_{L_x^\infty L_t^1}$.}

This section is devoted to the one estimate, namely on $\|u_{*}\|_{L_x^\infty L_t^1}$, which is not contained in~\cite{KS}. 
As evidenced by the previous section this norm is of crucial importance for the nonlinear argument.  

This section is devoted to the proof of this estimate, starting with the linear case. 
We use the expansions for the linear evolution associated with $\Box +V$, $V = -5W^4$, as derived in~\cite{KS}. 
In what follows, $H=-\Delta+V$ in $\R^3$ where $H\psi=0$ and $\psi$ is the unique zero energy resonance function, i.e., $|\psi(x)|\simeq |x|^{-1}$
for large~$|x|$.  We assume that $H$ does not have zero energy eigenfunctions. 

\begin{prop}\label{prop:extradisp}
We have the bounds 
\begin{align}
&\Big\|\Big(\frac{\sin(t\sqrt{H})}{\sqrt{H}}P_c-c_0\psi\otimes\psi
\Big)f\Big\|_{L_x^\infty L_t^1}\lesssim \|f\|_{{W}^{1,1}}  \label{sin_xt} \\
&\Big\|\cos(t\sqrt{H})P_cf\Big\|_{L_x^\infty L_t^1}\lesssim  \|f\|_{{W}^{2,1}}  \label{cos_xt}
\end{align}
\end{prop}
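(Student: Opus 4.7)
The plan is to use the distorted-Fourier representation of $H=-\Delta+V$ developed in [KS], based on a Jost-type family of generalized eigenfunctions $\phi(x,\lambda)$ and spectral density $\rho(\lambda)$. Write
\[
\frac{\sin(t\sqrt{H})}{\sqrt{H}}P_c f(x)=\int_0^\infty \frac{\sin(t\lambda)}{\lambda}\,\phi(x,\lambda)\langle\phi(\cdot,\lambda),f\rangle\,\rho(\lambda)\,d\lambda,
\]
with the analogous formula for $\cos(t\sqrt{H})P_c$, and split via a smooth cutoff $\chi(\lambda)+(1-\chi(\lambda))=1$ with $\chi$ supported in $[0,1]$ into a low-frequency and a high-frequency piece.

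For the high-frequency piece $\lambda\gtrsim 1$, express $\phi(x,\lambda)$ as a perturbation of the free Jost function through a Born series using the fast-decaying potential $V=-5W^4$. This reduces matters to the corresponding bound for the free wave propagator, which for compactly supported $W^{1,1}$ data admits an explicit kernel supported on the light cone: at fixed $x$ the propagator is nonzero only for $t\in[||x|-R|,|x|+R]$, and a direct computation yields a bound by the Newton potential of $|f|$, uniformly finite in $x$ on the admissible class of data. The iterated Born-series corrections then transfer this bound to the full high-frequency part, absorbing the $\langle x\rangle^{-4}$ decay of $V$. The cosine case is analogous but requires one additional derivative on $f$ because of the absence of the smoothing factor $1/\sqrt{H}$, which is what forces the $W^{2,1}$ norm.

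For the low-frequency piece $\lambda\lesssim 1$, the zero-energy resonance forces the subtraction of a stationary contribution. Using the expansions $\phi(x,\lambda)=\psi(x)+\lambda\,\psi_1(x,\lambda)$ and $\rho(\lambda)=\rho_0+O(\lambda)$ from [KS], the leading term of the integrand produces
\[
\psi(x)\psi(y)\,\rho_0\int_0^\infty \chi(\lambda)\frac{\sin(t\lambda)}{\lambda}\,d\lambda=c_0\,\psi(x)\psi(y)+R_1(t,x,y),
\]
where the first summand is precisely the subtracted stationary piece (with $c_0=\tfrac{\pi}{2}\rho_0$) and $R_1$ is $O(t^{-N})$ by repeated integration by parts in $\lambda$ against $\chi'$. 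The remaining contributions from the $O(\lambda)$ terms in the expansion carry an extra factor of $\lambda$ that cancels the $1/\lambda$ singularity, producing a smooth compactly supported $\lambda$-symbol; further integration by parts then yields an $O(t^{-2})$ kernel integrable in $t$ uniformly in $x$.

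The main obstacle is the uniform $x$-control. Since $\psi(x)\sim|x|^{-1}$ at infinity and the low-frequency asymptotics of $\phi(x,\lambda)$ degenerate as $\lambda\to 0$, the pointwise $(x,y)$-dependence of the kernel must be tracked carefully so that weights coming from $\psi$ and from the Jost-function growth remain compatible with the $L^\infty_x$ norm after time integration. A secondary subtlety is that the pairing $\langle\psi,f\rangle$ implicit in the subtracted stationary piece is meaningful only because $f\in W^{1,1}$, together with the support conditions inherited from the setting of Theorem~\ref{thm:Main}, provides enough decay to be paired with the non-$L^2$ resonance function.
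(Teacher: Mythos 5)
Your high-level outline --- a low/high frequency split with a Born expansion at high energy and a resonance subtraction at low energy --- is the right shape, but you implement it through a distorted Fourier (``Jost function'') representation, whereas the paper works directly with the resolvent kernel through Stone's formula
\[
\frac{\sin(t\sqrt{H})}{\sqrt{H}}P_c=\frac{1}{i\pi}\int_{-\infty}^\infty\sin(t\lambda)\,R(\lambda)\,d\lambda ,
\]
and then expands $R(\lambda)=\sum_{k<2n}(-1)^kR_0(VR_0)^k+(R_0V)^nR(VR_0)^n$. This matters: in $\R^3$ without assuming $f$ radial, a scalar family $\phi(x,\lambda)$ is not the natural object (one needs $e(x,\xi)$, $\xi\in\R^3$, or angular momentum sums), while the resolvent kernel $e^{i\lambda|x-y|}/(4\pi|x-y|)$ and its $V$-convolutions are explicit and are exactly what makes the $L^\infty_xL^1_t$ estimate computable term by term. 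Your proposal also says nothing about the tail $(R_0V)^nR(VR_0)^n$: the paper needs the limiting absorption principle together with two integrations by parts in $\lambda$ (and the $\langle x\rangle^{-1}\langle y\rangle^{-1}$ gain from $G_x$) precisely to control this piece, and ``absorbing the $\langle x\rangle^{-4}$ decay of $V$'' does not cover it.

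Two more concrete gaps. First, your free-wave step leans on compact support of $f$ (``$t\in[\,||x|-R|,|x|+R\,]$''), but the proposition is stated for arbitrary $f\in W^{1,1}$, and after a single Born iterate the effective datum $V R_0 f$ is in any case no longer compactly supported, so ``transfer this bound'' is not a step you can take. The paper instead uses the Strauss/Stokes identity $\int_0^\infty t^{-1}\big|\int_{|y|=t}f\big|\,dt=\int_0^\infty t^{-2}\big|\int_{|y|\le t}\nabla\!\cdot\!(yf)\big|\,dt\lesssim\|\nabla f\|_{L^1}$, i.e.\ a Hardy inequality in $L^1$, which needs no support hypothesis and is exactly what persists under the Born iteration. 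Second, at low energy your assertion that the ``$O(\lambda)$ terms cancel the $1/\lambda$ singularity and produce a smooth compactly supported $\lambda$-symbol'' is not how the error actually behaves: the Jensen--Kato remainder $E(\lambda)$ is merely an operator-valued function whose Fourier transform $\widehat{\chi_1E}$ is absolutely integrable as a family of $L^2\to L^2$ operators, and the paper's $F(\lambda)$-method (placing absolute values and bounding $\sup_x\|v(\cdot)/|x-\cdot|\|_{L^2}$) is needed precisely because you cannot simply integrate by parts as if the symbol were smooth. Finally, the ``secondary subtlety'' you flag about $\langle\psi,f\rangle$ is a non-issue: $\psi$ is bounded (it only decays like $|x|^{-1}$ at infinity and is regular at the origin), so the pairing converges for any $f\in L^1$ without any support condition.
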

\begin{proof} We begin with $V = 0$. For the sine evolution, we get (putting the argument $x = 0$)
\begin{align}
&\int_0^\infty\frac{1}{t}\Big|\il_{[|y|=t]} f(y)\,\sigma(dy) \Big|\,dt
 = \int_0^\infty t^{-2}\Big|\il_{[|y|\le
t]} \nabla(f(y)y)\, dy\Big|\,dt \label{eq:strauss}\\
&\less \il_{\R^3} |\nabla f(y)|\, dy + \Big(\il_{\R^3}
\frac{|f(y)|}{|y|}\,
dy\Big) \less \il_{\R^3} |\nabla f(y)|\, dy\nn
\end{align}
The last step uses integration by parts in polar coordinates. 

For the cosine evolution, one has 

\begin{align*}
\cos(t\sqrt{H}) f(x) &= \pr_t\; t \il_{S^2} f(x+ty)\, \sigma(dy) \\
& = \il_{S^2} \big[ f(x+ty)+ t(\nabla f)(x+ty)\cdot y\big]\,
\sigma(dy) \\
\end{align*}
and so 
\[
\big\|\cos(t\sqrt{H}) f\big\|_{L_x^\infty L_t^1}\less \big \|\frac{f}{|x|^2}  \big\|_{L_x^1} +   \big\|\frac{\nabla f}{|x|} \big\|_{L_x^1} \less \|D^2 f\|_{L_x^1}
\]

In case $V\ne0$ we write the $\frac{\sin(t\sqrt{H})}{\sqrt{H}}$ evolution  in the form 
\EQ{\label{eq:sintrans} \f{1}{i\pi}\il_0^\infty
\frac{\sin(t\lambda)}{\lambda}
[R_V^+(\lambda^2)-R_V^{-}(\lambda^2)]\,\lambda d\lambda =
\f{1}{i\pi} \il_{-\infty}^\infty \sin(t\lambda) R(\lambda) d\lambda
} where we have set $R(\lambda):=R_V^+(\lambda^2)$ if
$\lambda>0$ and $R(\lambda)=\overline{R(-\lambda)}$ if
$\lambda<0$. For the free resolvent, we write this as $R_0$. Then,
by the usual resolvent expansions,
\EQ{  \label{eq:res} R=
\sum_{k=0}^{2n-1} (-1)^k R_0(VR_0)^k + (R_0V)^n R (VR_0)^n 
}
We distinguish between small energies and all other energies. For
the latter, we use~\eqref{eq:res}. Let $\chi_0(\lambda)=0$ for all
$|\lambda|\le \lambda_0$ and $\chi_0(\lambda)=1$ if
$|\lambda|>2\lambda_0$. Here $\lambda_0>0$ is some small parameter.
Fix some $k$ as in~\eqref{eq:res} and consider the contribution of
the corresponding Born term (ignoring a factor of $(4\pi)^{-k-1}$):
\EQ{\label{born} 
&\il_{\R^{3(k+2)}} \il_{-\infty}^\infty
\chi_0(\lambda) \sin(t\lambda) e^{i\lambda\sum_{j=0}^{k}
|x_j-x_{j+1}|}
 \frac{\prod_{j=1}^k V(x_j)}{\prod_{j=0}^k|x_j-x_{j+1}|}
 \,f(x_0)\,d\lambda\,dx_0\ldots dx_{k} \\ &
=\f{1}{2i}\sum_{\pm}\pm \int_{\R}\il_{\R^{3k}} \widehat{\chi_0}(\xi)
\il_{[|x_0-x_1|=\pm t-\xi-\sum_{j=1}^{k} |x_j-x_{j+1}|>0]}
\frac{f(x_0)}{|x_0-x_1|}\, \sigma(dx_0)\,
 \\ &
 \qquad \qquad \frac{\prod_{j=1}^k V(x_j)}{\prod_{j=1}^k|x_j-x_{j+1}|}
 \,dx_1\ldots dx_{k}\, d\xi
}
where $x_{k+1}$ is fixed.  Placing absolute values  inside these integrals and
  integrating over $t\in\R$ yields  an upper bound
  \EQ{
& \int_{\R}  |\widehat{\chi_0}(\xi)|  \il_{\R^{3}}  \frac{|f(x_0)|}{|x_0-x_1|}\, dx_0\,
 \il_{\R^{3k}}  \frac{\prod_{j=1}^k |V(x_j)| }{\prod_{j=1}^k|x_j-x_{j+1}|}
 \,dx_1\ldots dx_{k}\, d\xi \\
 &\less  \| \nabla f \|_{1}  \| \nabla V \|_{1}^{k}
  }
It remains to bound the contribution by the final term
in~\eqref{eq:res} which involves the resolvent~$R(\lambda)$. Its
 kernel $K(x,y)$ can be reduced to the form
\begin{align}
&\int e^{\pm it\lambda}\chi_0(\lambda) \la R(\lambda)
(VR_0(\lambda))^n(\cdot,x),
(VR_0(-\lambda))^n(\cdot,y) \ra \,d\lambda \nn\\
&= \int e^{i\lambda[\pm t+(|x|+|y|)]}\chi_0(\lambda) \la R(\lambda)
(VR_0(\lambda))^{n-1}VG_x(\lambda,\cdot), \\
&\qquad 
(VR_0(-\lambda))^{n-1}VG_y(-\lambda,\cdot) \ra \,d\lambda
\label{eq:Gint}
\end{align}
where
\[ G_{x}(\lambda,u) := \frac{e^{i\lambda(|x-u|-|x|)}}{4\pi|x-u|} \]
and the scalar product appearing in~\eqref{eq:Gint} is just another way of writing the composition of
the operators.
One has the following elementary bounds, see for example Lemma~11 in \cite{KS}: 
\EQ{
\label{eq:Gest}
\sup_{x\in\R^3} \Big\| \frac{d^j}{d\lambda^j} G_{x}(\lambda,
\cdot)\Big\|_{L^{2,-\sigma}} & < C_{j,\sigma} \text{\ \ provided\ \ } \sigma > \frac12 + j  \\
\sup_{x\in\R^3} \Big\| \frac{d^j}{d\lambda^j}
G_{x}(\lambda,\cdot)\Big\|_{L^{2,-\sigma}} &<
\frac{C_{j,\sigma}}{\la x\ra} \text{\ \ provided\ \ }
\sigma>\frac32+j
}
for all $j\ge 0$.    Let for some large $n$ (say $n=10$)
\[
a_{x,y}(\lambda) := \chi_0(\lambda) \la R(\lambda)
(VR_0(\lambda))^{n-1}VG_x(\lambda,\cdot),
(VR_0(-\lambda))^{n-1}VG_y(-\lambda,\cdot) \ra
\]
Then in view of the preceding one concludes that $a_{x,y}(\lambda)$
has two derivatives in~$\lambda$ and 
\begin{equation}
\label{eq:adec}
\Big|\frac{d^j}{d\lambda^j} a_{x,y}(\lambda)\Big| \less
(1+\lambda)^{-2} 
\text{\ \ for\ \ } j = 0,1,2 \text{\ \ and all\ \ }\lambda>1 
\end{equation}
Moreover, 
\EQ{\label{a2}
\Big|\frac{d^j}{d\lambda^j} a_{x,y}(\lambda)\Big| \less
(1+\lambda)^{-2} (\la x\ra\la y\ra)^{-1}
\text{\ \ for\ \ } j = 0,1, \text{\ \ and all\ \ }\lambda>1
}
The decay in $\lambda$ here comes  from the limiting absorption principle
which refers to the following standard bounds for the free and
perturbed resolvents:  
\begin{align}
\|R_V(\lambda^2\pm i0)\|_{L^{2,\sigma}\to L^{2,-\sigma}} &\less
\lambda^{-1}, \qquad \sigma>\f12 \label{eq:limap}\\
\|\pr_\lambda^\ell R_V(\lambda^2\pm i0)\|_{L^{2,\sigma}\to
L^{2,-\sigma}} &\less 1, \qquad \sigma>\f12+\ell, \qquad \ell\ge1 \nn
\end{align}
for $\lambda$ separated from zero. The estimates~\eqref{eq:adec} and~\eqref{a2} only
require $|V(x)|\less \la x\ra^{-\kappa}$ with $\kappa>3$.

Let us assume first that $t>1$. To estimate~\eqref{eq:Gint} we
distinguish between $|t-(|x|+|y|)|<t/10$ and the opposite case.  In
the former case, we conclude that
\[ \max(|x|,|y|)\gtrsim t \]
so that due to \eqref{eq:adec} we obtain
\EQ{\label{bd1} \Big| \int e^{i\lambda[\pm t+(|x|+|y|)]}
a_{x,y}(\lambda)\,d\lambda \Big| \less \chi_{[|x|+|y|>t]}(\la
x\ra\la y\ra)^{-1}
}
Integrating \eqref{bd1} over $t\in\R$ yields a bound $O(1)$ which implies an $L^{1}_{x}\to L^{\I}_{y}L^{1}_{t}$
estimate. 

In the latter case we integrate by parts twice which gains 
$t^{-2}$ for $|t|>1$ from~\eqref{eq:adec}:
\[ \Big| \int e^{i\lambda[\pm t+(|x|+|y|)]}
a_{x,y}(\lambda)\,d\lambda \Big| \less |t|^{-2}
\]
For $|t|\less1$ one has $O(1)$. We can again integrate this over $t\in\R$ as before. 

We now turn to the contribution of small $\lambda$ to the $\sin$-evolution.  We recall
the following representation of the resolvent at small energies, see (105) in \cite{KS}: 
\begin{align}
\label{eq:3piece}
 R(\lambda) &=i\f{\beta}{\lambda} R_0(\lambda)vS_1v
R_0(\lambda) +R_0 (\lambda) - R_0(\lambda)vE(\lambda)vR_0(\lambda).
\end{align}
where with $w:=\sqrt{|V|}$, 
\[
S_1 = \|w\psi\|_2^{-2} w\psi\otimes w\psi =: \tilde\psi\otimes \tilde\psi
\]
and $\beta=4\pi \Big(\il_{\R^3} V\psi\, dx\Big)^{-2}\|w\psi\|_2^2$. For the explicit form of $E(\lambda)$ see (104) in \cite{KS}. 
Next, we describe the contribution of each of the three terms in~\eqref{eq:3piece} to the
sine-transform~\eqref{eq:sintrans}. We can ignore the second one,
since it leads to the free case. The first term on the right-hand
side of~\eqref{eq:3piece} yields the following expression
in~\eqref{eq:sintrans}:
\EQ{
&\calS_0(t)(x,y) := \f{\beta}{\pi} \int \f{\sin(t\lambda)}{\lambda}
\chi_1(\lambda)
\big[R_0(\lambda) vS_1v R_0(\lambda)\big](x,y) \, d\lambda \\
& := \|w\psi\|_2^{-2}{\beta}\, \psi(x)\psi(y) - \|w\psi\|_2^{-2}\f{\beta}{2\pi} \il_{\R^6} \int_{[|\tau|>t]}
\widehat\chi_1(\tau+|x-x'|+|y'-y|) \nn \\
&  \qquad\qquad
\f{V(x')\psi(x')\,V(y')\psi(y')}{4\pi|x-x'|\, 4\pi|y'-y|}\,
d\tau\,dx'dy' \label{eq:VV}
}
We need to verify that uniformly in $x,y\in\R^{3}$ the integral over $t\in\R$ of the last
line is~$O(1)$. 
Indeed  
\begin{align*}
& \il_{\R^6}\il_{\R^6} \int_{[|\tau|>t]}
|\widehat\chi_1(\tau+|x-x'|+|y'-y|)|
\f{|V(x')\psi(x')\,V(y')\psi(y')|}{4\pi|x-x'|\, 4\pi|y'-y|}\,
d\tau\,dx'dy'\Big|\\
& \less \il_{[|x-x'|+|y-y'|<t/2]} \int_{[|\tau|>t]}
|\widehat\chi_1(\tau+|x-x'|+|y'-y|)|\, d\tau \\
&\qquad\qquad 
\f{|V(x')\psi(x')|\,|V(y')\psi(y')|}{|x-x'|\,
|y'-y|}\,dx'dy'  \\
&\quad +\il_{[|x-x'|+|y-y'|>t/2]}\int
|\widehat\chi_1(\tau+|x-x'|+|y'-y|)|\, d\tau \\
&\qquad\qquad 
\,\f{|V(x')\psi(x')|\,|V(y')\psi(y')|}{|x-x'|\, |y'-y|}\, dx'dy' 
\end{align*}
The first integral in the final expression is rapidly decaying in~$t$, and thus
gives the desired bound, whereas the second one upon integration in~$t$ is bounded by
\EQ{
&\il ( |x-x'|+|y-y'|)
\,\f{|V(x')\psi(x')|\,|V(y')\psi(y')|}{|x-x'|\, |y'-y|}\, dx'dy'   \less 1
}
Finally, we turn to the third term on the right-hand side
of~\eqref{eq:3piece}.  The convergence of the Neumann
series defining~$E(\lambda)$  in $L^{2}$ for small~$\lambda$ was established in \cite{KS}. 
We analyze the contribution by the constant term, viz.
\[ E(0)=(A_0+S_1)^{-1} + E_1(0)S_1m(0)^{-1}S_1+S_1E_2(0)S_1 + S_1 m(0)^{-1}S_1 E_1(0) \]
see (104) in \cite{KS}.  From (108), (109) in \cite{KS} one has 
\begin{align}
&  \il_{\R^{3}}\il_{-\infty}^\infty \sin(t\lambda) \chi_1(\lambda)
[R_0(\lambda)vE(0)vR_0(\lambda)](x,y)\, d\lambda \; f(x)\, dx   \nn\\
&=\f{1}{32 i\pi^2} \il_{\R^{3}} \il_{\R^6} \il_{-\infty}^\infty \delta( t + \xi +
[|x-x'|+|y'-y|])\;\widehat\chi_1(\xi) \, d\xi\\ & \qquad\qquad 
\f{v(x')E(0)(x',y')v(y')}{|x-x'|\,|y-y'|}\, dx'dy'  \, f(x)\, dx\nn     \\
& \quad -\f{1}{32 i\pi^2}  \il_{\R^{3}} \il_{\R^6} \il_{-\infty}^\infty \delta(- t
+ \xi + [|x-x'|+|y'-y|])\;\widehat\chi_1(\xi) \, d\xi\\ &\qquad\qquad 
\f{v(x')E(0)(x',y')v(y')}{|x-x'|\,|y-y'|}\, dx'dy'  \; f(x)\, dx \nn 
\end{align}
Placing absolute values inside these expressions and integrating over $t\in\R$ yields
an upper bound of the form (for $y$ fixed)
\EQ{
&\less  \il_{\R^{3}} \il_{\R^6} \il_{-\infty}^\infty | \widehat\chi_1(\xi)| \, d\xi\,
\f{|v(x')E(0)(x',y')v(y')|}{|x-x'|\,|y-y'|}\, dx'dy'  \, |f(x)| \, dx\nn     \\
& +  \il_{\R^{3}} \il_{\R^6} \il_{-\infty}^\infty | \widehat\chi_1(\xi) | \, d\xi\,
\f{|v(x')E(0)(x',y')v(y')|}{|x-x'|\,|y-y'|}\, dx'dy'  \; |f(x)|\, dx \nn 
}
which in turn is bounded by
\EQ{
\|  \widehat\chi_1 \|_{1} \sup_{x}\Big \|  \f{v(x')}{|x-x'|} \Big\|_{L^2_{x'}}^{2}   \|\,|E(0)(\cdot,\cdot)|\,\|_{2\to2} \|f\|_{1} 
\less \|f\|_{1}
}
since $E(0)$ is absolutely bounded on~$L^{2}$, see \cite{KS}. 

To deal with $E(\lambda)$ we proceed as in \cite{KS} using the $F(\lambda)$-method. To be specific, we claim the bound
\EQ{\label{Flam}
\il_{-\I}^{\I }\Big| \il_{\R^6} \il_{-\infty}^\infty \sin(t\lambda) \chi_1(\lambda)
[R_0(\lambda)vF(\lambda)vR_0(\lambda)](x,y)\, d\lambda\, f(x)\,
dx  \Big| \, dt \less  \| f\|_1 
}
provided the operator-valued function
$F(\lambda)$ satisfies
\EQ{\label{Flam_int}
\il_{-\infty}^\infty \Big\| \, |\widehat{\chi_1
F}(\xi)(\cdot,\cdot)|\, \Big\|_{2\to2} \, d\xi <
\infty  
}
The latter property holds for $E(\lambda)$, see (113), (116), (117) in \cite{KS}.    
To prove~\eqref{Flam} we let $\chi_{1}\chi_{2}=\chi_{1}$ for some bump function~$\chi_{2}$ and compute 
\EQ{
& \il_{\R^{3}}\il_{-\infty}^\infty \sin(t\lambda) \chi_1(\lambda)
[R_0(\lambda)vF(\lambda)vR_0(\lambda)](x,y)\, d\lambda \, f(x)\, dx \nn\\
&=\f{1}{32 i\pi^2}\il_{\R^{3}} \il \il_{\R^6}
\il_{-\infty}^\infty \delta( t + \xi +\eta +
[|x-x'|+|y'-y|])\;\widehat\chi_1(\xi) \, d\xi\, \\
&\qquad\qquad 
\f{v(x')\hat{\chi_{2} F}(\eta)(x',y')v(y')}{|x-x'|\,|y-y'|}\, dx'dy' d\eta \, f(x)\, dx \nn \\
& \quad -\f{1}{32 i\pi^2} \il_{\R^{3}} \il   \il_{\R^6}
\il_{-\infty}^\infty \delta(- t + \xi +\eta +
[|x-x'|+|y'-y|])\;\widehat\chi_1(\xi) \, d\xi\, \\
&\qquad\qquad 
\f{v(x')\hat{\chi_{2}F}(\eta)(x',y')v(y')}{|x-x'|\,|y-y'|}\, dx'dy' d\eta \, f(x)\, dx  \nn
}
Placing absolute values inside and integrating over $t\in\R$ yields the upper bound
\EQ{
\| \widehat\chi_1\|_{1}  
\sup_{x}\Big \|  \f{v(x')}{|x-x'|} \Big\|_{L^2_{x'}}^{2}  \il_{-\infty}^\infty \Big\| \, |\widehat{\chi_1
F}(\xi)(\cdot,\cdot)|\, \Big\|_{2\to2} \, d\xi   \;    \|f\|_{1}   \less \|f\|_{1}
}
uniformly in $y\in\R^{3}$. 
This concludes the small $\lambda$ argument for the $\sin$-evolution, and in combination with
the previous estimate for $\lambda>\lambda_{0}>0$ we have established~\eqref{sin_xt}.

It remains to estimate the $\cos$-evolution, see~\eqref{cos_xt}. We base our analysis on the relation
by 
\EQ{\label{sincos}
\cos(t\sqrt{H})P_{c} = \p_{t} \f{\sin(t\sqrt{H})}{\sqrt{H}} P_{c}
}
The small frequencies present no problem, as \eqref{sincos} shows that the only difference in the
oscillatory integrals is a factor of~$\lambda$, which is small and thus immaterial. On the other hand, for
large $\lambda$ this extra factor accounts for the additional derivative on the data. To be more specific, the 
final term in the Born-series~\eqref{eq:res} does not present a problem either. This is due to the fact that
in~\eqref{eq:adec} and~\eqref{a2} we may obtain arbitrary decay in~$\lambda$ by taking~$n$ in~\eqref{eq:res}
as large as wish (but of course fixed).  In particular, we can absorb the extra power of~$\lambda$ coming 
from the~$\p_{t}$. It therefore just remains to treat the summands in~\eqref{eq:res} involving only the free
resolvent.   In analogy with~\eqref{born} one has
\EQ{\nn
&\il_{\R^{3(k+2)}} \il_{-\infty}^\infty
\chi_0(\lambda) \cos(t\lambda) \lambda\, e^{i\lambda\sum_{j=0}^{k}
|x_j-x_{j+1}|}
 \frac{\prod_{j=1}^k V(x_j)}{\prod_{j=0}^k|x_j-x_{j+1}|}
 \,f(x_0)\,d\lambda\,dx_0\ldots dx_{k} \\ 
 &= \il_{\R^{3(k+2)}} \il_{-\infty}^\infty
\chi_0(\lambda) \cos(t\lambda) \, \calL^{*} \Big[ e^{i\lambda\sum_{j=0}^{k}
|x_j-x_{j+1}|}
 \frac{\prod_{j=1}^k V(x_j)}{\prod_{j=0}^k|x_j-x_{j+1}|}
 \,f(x_0) \Big] \,d\lambda\,dx_0\ldots dx_{k}  \label{coslam}
 }
where $x_{k+1}$ is fixed and with 
\[
\calL:=\f{1}{i\lambda} \f{x_{0}-x_{1}}{|x_{0}-x_{1}|} \cdot \p_{x_{0}}
\]
Note that $\calL e^{i\lambda|x_{0}-x_{1}|} =  e^{i\lambda|x_{0}-x_{1}|} $. 
The $x_{0}$-derivative in~\eqref{coslam} can fall on either $|x_{0}-x_{1}|^{-1}$ or~$f(x_{0})$.
In the latter case we proceed exactly as in~\eqref{born} and obtain an upper bound for the $L^{\I}_{y}L^{1}_{t}$-norm
by~$\| D^{2}f\|_{1}$. In the former case one replaces $f$ with  $\f{f(x_{0})}{|x_{0}-x_{1}|}$ and again proceeds
as in~\eqref{born}. The resulting bound is
\[
\sup_{x'\in\R^{3}}\Big\| \nabla_{x} \big( \f{f(x)}{|x-x'|} \big) \Big\|_{L^1_{x}} \less \| D^{2}f\|_{1}
\]
as desired. 
\end{proof}

We use the preceding proposition to obtain the following key bound on $u_*$: 
\begin{prop}\label{prop:fullextradisp} Let $W_a(t)+u_*$ be the solution of \eqref{eqn:foccrit} with data $u_*[0] = (f_1 + h(f_1, f_2)g_0, f_2)\in \Sigma$, as given in \cite{KS}. Then we have the bound 
\begin{equation}\label{eq:extradisp}
\|u_*\|_{L_x^\infty L_t^1}\ll 1
\end{equation}
\end{prop}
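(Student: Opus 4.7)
The plan is to use a Duhamel representation of $u_*$ with respect to the asymptotic Hamiltonian $\calH_\I$ and then reduce matters to the linear estimate of Proposition~\ref{prop:extradisp}. Starting from $\Box(W_{a(t)}+u_*)=(W_{a(t)}+u_*)^5$ together with $-\Delta W_{a(t)}=W_{a(t)}^5$, the perturbation satisfies
\EQ{\nn
\partial_{tt} u_* + \calH_\I u_* = F(t),\quad F(t):=N(u_*,W_{a(t)}) + (\calH_\I-\calH(a(t)))u_* - \partial_{tt} W_{a(t)}.
}
I would split $u_*=P_c u_* + \de_*(t) g_\I$ with $P_c$ the projection onto the continuous spectrum of $\calH_\I$, so that the continuous part has the Duhamel representation
\EQ{\nn
P_c u_*(t) = \cos(t\sqrt{\calH_\I})P_c u_*(0) + \f{\sin(t\sqrt{\calH_\I})}{\sqrt{\calH_\I}}P_c\p_t u_*(0) + \il_0^t \f{\sin((t-s)\sqrt{\calH_\I})}{\sqrt{\calH_\I}}P_c F(s)\,ds.
}
The bound state piece $\de_*(t) g_\I$ is treated separately: the hypothesis that the data lies on $\Sigma$ forces $\de_*(t)$ to decay (in fact exponentially, since the unstable mode must be asymptotically quenched on the center-stable manifold), so its contribution to $L^\I_x L^1_t$ is harmless.

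First, for the linear evolution of the data, the initial conditions are supported in $B(0,R)$ and belong to $H^3\times H^2$; by Cauchy--Schwarz this yields a $W^{2,1}\times W^{1,1}$ bound of size $C(R)\eps_*$, and Proposition~\ref{prop:extradisp} gives an $L^\I_x L^1_t$ bound of order $\eps_*$ modulo the time-independent zero-resonance term $c_0\psi(x)\lan \psi,\p_t u_*(0)\ran$. For the Duhamel term, Minkowski's inequality and a second application of Proposition~\ref{prop:extradisp} reduces the task to verifying
\EQ{\nn
\il_0^\I \|F(s)\|_{W^{1,1}}\,ds \ll 1,
}
again modulo the resonance subtraction. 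The individual pieces of $F(s)$ are controlled by the a priori bounds \eqref{eq:u_*bounds1}--\eqref{eq:abounds}: the potential difference $(\calH_\I-\calH(a(s)))u_*$ is $\less \de\langle s\rangle^{-2}$ in $W^{1,1}$ from the fast spatial decay of $V_{a(\I)}-V_{a(s)}$ combined with $|a(\I)-a(s)|\less\de\langle s\rangle^{-1}$; the term $\p_{tt} W_{a(t)}$ contributes $\dot a^2+|\ddot a|=O(\langle s\rangle^{-3})$; and $N(u_*,W_{a(t)})$ is controlled by combining $\|u_*\|_\I\less\de\langle s\rangle^{-1}$ with the pointwise bound $|u_*|\less\de\langle x\rangle^{-1}$. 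Borderline decay in some quadratic contributions can be absorbed by using the sharper dispersive profile from \cite{KS} rather than the displayed $L^\I_x$ bounds in isolation.

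The main obstacle is the zero-energy resonance term $c_0 \psi\otimes\psi$ appearing in Proposition~\ref{prop:extradisp}: being time-independent, a naive application would generate a non-integrable contribution in $L^1_t$ both from the initial datum and at every time slice of the Duhamel integral. This must be eliminated by the modulation parameter $a(t)$: the defining equation for $a(t)$ in \cite{KS} is designed precisely so that the projection of the effective source onto $\psi=\La W_{a(\I)}$ (the zero resonance of $\calH_\I$) is absorbed by the choice of the asymptotic soliton parameter $a(\I)$, and the averaged control established in Lemma~\ref{lem:a(t)keybound} supplies exactly the cancellation needed. Verifying this cancellation carefully is the crux of the argument; once in place, assembling the contributions above gives $\|u_*\|_{L^\I_x L^1_t}\ll 1$, as required.
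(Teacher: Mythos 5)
There is a genuine gap. You set up a Duhamel representation from scratch with source $F(t)=N(u_*,W_{a(t)})+(\calH_\I-\calH(a(t)))u_*-\partial_{tt}W_{a(t)}$ and try to feed it into Proposition~\ref{prop:extradisp}. The term $\partial_{tt}W_{a(t)}=\ddot a\,\partial_\lambda W_\lambda+\dot a^2\partial_\lambda^2 W_\lambda$ is fatal here: $\partial_\lambda W_\lambda$ decays only like $\langle x\rangle^{-1}$ and is therefore not in $W^{1,1}$, so the $L^1_s W^{1,1}_x$ bound you claim for $F(s)$ is false for this term no matter what temporal decay $\ddot a$ has (and in fact no bound on $\ddot a$ is recorded among \eqref{eq:abounds}). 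The paper sidesteps this by not rederiving Duhamel but instead invoking the representation (33) of \cite{KS}, in which this modulation source appears as the spatially localized difference $\dot a(s)\bigl[\partial_\lambda W_\lambda|_{\lambda=a(s)}-(a(\I)/a(s))^{5/4}\partial_\lambda W_\lambda|_{\lambda=a(\I)}\bigr]$, gaining both the extra $\langle x\rangle^{-1}$ spatial decay and a factor $|a(\I)-a(s)|$ in time; this is the algebraic cancellation you correctly intuit must exist, but it is already built into the \cite{KS} formula rather than needing a new argument.

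Two further points make the write-up unsound as given. First, your handling of the zero-energy resonance is circular: you defer the ``crux'' cancellation to Lemma~\ref{lem:a(t)keybound}, but that lemma's proof explicitly uses the present Proposition~\ref{prop:fullextradisp}, so you cannot invoke it here. In the paper no separate cancellation argument is needed because the resonance subtraction is already built into the kernel $\mathcal{S}(t)=\sin(t\sqrt{H_\I})/\sqrt{H_\I}\,P_c-c_0\psi\otimes\psi$ appearing in (33) of \cite{KS}. Second, the bound-state coefficient $\delta_*(t)$ is \emph{not} exponentially decaying as you assert; \cite{KS} gives only $|\delta_*(t)|\lesssim C(\eps_*)\langle t\rangle^{-1}$, which is not integrable in $t$, so that contribution is not ``harmless'' under your decomposition. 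In the paper's representation all terms carry $P_{g_\I^\perp}$ and the $g_\I$-component is absorbed in the compactly supported remainder $R(t,\cdot)$, so the issue does not arise. Finally, the paper also needs the $\int_0^t=\int_0^\I-\int_t^\I$ splitting of the Duhamel integrals (treating the tail $\int_t^\I$ by pointwise decay rather than by Proposition~\ref{prop:extradisp}), since a direct Minkowski application as in your sketch does not give a bound uniform in $t$ for the near-diagonal region; this piece of the argument is also missing.
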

\begin{proof} We use formula (33) in \cite{KS} which gives the representation 
\begin{align*}
&u_*(t, \cdot) = \cos(t\sqrt{\calH}_\infty)P_{g_\infty^{\perp}}w_1 + \mathcal{S}(t)P_{g_\infty^{\perp}}w_2\\
&-\int_0^t \dot{a}(s)\cos([t-s]\sqrt{\calH}_\infty)P_{g_\infty^{\perp}}\big[\partial_{\lambda}W_{\lambda}|_{\lambda = a(s)} - \big(\frac{a(\infty)}{a(s)}\big)^{\frac{5}{4}}\partial_{\lambda}W_{\lambda}|_{\lambda = a(\infty)}\big]\,ds\\
& - \int_0^t \mathcal{S}(t-s)P_{g_\infty^{\perp}}\big[(V_{a(\infty)} - V_{a(s)})u_{*}(s, \cdot) + N(u_*, W_{a(s)})\big]\,ds\\
&-R(t, \cdot) 
\end{align*}
with $R(t, \cdot)$ compactly supported in $t$ and bounded, whence irrelevant for the proof. Also, we have 
\[
w_1 = f_1 + h(f_1, f_2)g_0,\,w_2 = f_2
\]
and we use the notation 
\[
\mathcal{S}(t) = \frac{\sin(t\sqrt{H_\infty})}{\sqrt{H_\infty}}P_c-c_0\psi\otimes\psi
\]
with the same notation as in Proposition~\ref{prop:extradisp}. Then the bound \eqref{eq:extradisp} is implied by Proposition~\ref{prop:extradisp} for the expression
\[
\cos(t\sqrt{\calH}_\infty)P_{g_\infty^{\perp}}w_1 + \mathcal{S}(t)P_{g_\infty^{\perp}}w_2
\]
and hence it remains to bound the Duhamel terms. We write 
\begin{align*}
&\int_0^t \dot{a}(s)\cos([t-s]\sqrt{\calH}_\infty)P_{g_\infty^{\perp}}\big[\ldots\big]\,ds \\&= \int_0^\infty \dot{a}(s)\cos([t-s]\sqrt{\calH}_\infty)P_{g_\infty^{\perp}}\big[\ldots\big]\,ds\\
& - \int_t^\infty \dot{a}(s)\cos([t-s]\sqrt{\calH}_\infty)P_{g_\infty^{\perp}}\big[\ldots\big]\,ds
\end{align*}
and similarly for the expression 
\[
\int_0^t \mathcal{S}(t-s)P_{g_\infty^{\perp}}\big[\ldots\big]\,ds
\]

\noindent {\it{{\bf (1)}  Contribution of the cosine terms}}. 

\noindent From \cite{KS} we infer the bound 
\[
\big|\nabla_x^j\big(\partial_{\lambda}W_{\lambda}|_{\lambda = a(s)} - \big(\frac{a(\infty)}{a(s)}\big)^{\frac{5}{4}}\partial_{\lambda}W_{\lambda}|_{\lambda = a(\infty)}\big)\big|\lesssim |a(\infty) - a(s)|\langle x\rangle^{-3-j}
\]
whence from \eqref{eq:abounds} we infer 
\[
\big\|\dot{a}(s)P_{g_\infty^{\perp}}\big[\partial_{\lambda}W_{\lambda}|_{\lambda = a(s)} - \big(\frac{a(\infty)}{a(s)}\big)^{\frac{5}{4}}\partial_{\lambda}W_{\lambda}|_{\lambda = a(\infty)}\big]\big\|_{L_s^1W^{2,1}}\ll\big\|\langle s\rangle^{-3}\big\|_{L_s^1}\lesssim 1
\]
Then Proposition~\ref{prop:extradisp} implies
\[
\big\|\int_0^\infty \dot{a}(s)\cos([t-s]\sqrt{\calH}_\infty)P_{g_\infty^{\perp}}\big[\ldots\big]\,ds\big\|_{L_x^\infty L_t^1}\ll 1
\]
For the second Duhamel cosine term, $\int_t^\infty \dot{a}(s)\cos([t-s]\sqrt{\calH}_\infty)P_{g_\infty^{\perp}}\big[\ldots\big]\,ds$, we can crudely use Sobolev embedding $H^2(\R^3)\subset L^\infty$: 
\begin{align*}
&\big|\int_t^\infty \dot{a}(s)\cos([t-s]\sqrt{\calH}_\infty)P_{g_\infty^{\perp}}\big[\ldots\big]\,ds\big|\\&\leq\big\|\int_t^\infty \dot{a}(s)\cos([t-s]\sqrt{\calH}_\infty)P_{g_\infty^{\perp}}\big[\ldots\big]\,ds\big\|_{H^2}\\
&\lesssim \int_t^\infty s^{-2}\|P_{g_\infty^{\perp}}\big[\ldots\big]\|_{H^2}\,ds\ll t^{-2}
\end{align*}
 which is integrable. 
 
 \medskip
 
 \noindent {\it{{\bf (2)} Contribution of the sine terms.}} 
 
 \noindent First, consider the term 
 \[
  \int_0^\infty \mathcal{S}(t-s)P_{g_\infty^{\perp}}\big[(V_{a(\infty)} - V_{a(s)})u_{*}(s, \cdot) + N(u_*, W_{a(s)})\big]\,ds
  \]
  Using Proposition~\ref{prop:extradisp}, it suffices to prove 
  \[
  \|P_{g_\infty^{\perp}}\big[(V_{a(\infty)} - V_{a(s)})u_{*}(s, \cdot) + N(u_*, W_{a(s)})\|_{L_s^1 W^{1,1}}\ll 1
  \]
  Note that 
  \begin{align*}
  \|(V_{a(\infty)} - V_{a(s)})u_{*}(s, \cdot)\|_{W^{1,1}}&\lesssim |a(\infty) - a(s)|\big(\|u_{*}(s, \cdot)\|_{L_x^\infty} + \|\nabla_x u\|_{L_x^2 +L_x^\infty}\big)\\
  &\ll \langle s\rangle^{-1-\eps}
  \end{align*}
  thanks to \eqref{eq:u_*bounds1}, which is integrable. As for the term $ N(u_*, W_{a(s)})$, we consider the contributions of $u_*^2W_{a(s)}^3$, $u_*^5$. For the first, we obtain 
  \begin{align*}
  \big\|u_*^2(s, \cdot)W_{a(s)}^3\big\|_{W^{1,1}}&\lesssim \|u_{*}(s, \cdot)\|_{L_x^\infty}\|u_{*}\|_{W^{1,2}+W^{1,M}}\|W_{a(s)}^3\|_{L_x^{1+}}   \lesssim \langle s\rangle^{-1-\frac{\eps}{2}}
  \end{align*}
 where we have interpolated between the second bound of \eqref{eq:u_*bounds1} and the first one of \eqref{eq:u_*bounds2}; this decay rate is again integrable. \\
 For the pure power term, we get 
 \begin{align*}
 \big\| u_*^5\big\|_{W^{1,1}}&\lesssim \|u_{*}(s, \cdot)\|_{L_x^\infty}\|u_{*}(s, \cdot)\|_{W^{1,2}+W^{1,M}}\|u_{*}^3\|_{L_x^{1+}\cap L_x^2} \ll \langle s\rangle^{-1-\frac{\eps}{2}}
 \end{align*}
 Here we have also used the strong spatial decay estimate for $u_{*}$, i. e. the second bound of \eqref{eq:u_*bounds2}. This completes the estimate for the contribution of the first sine Duhamel term. 
 
 \smallskip 
 
 It remains to consider the expression 
  \[
  \int_t^\infty \mathcal{S}(t-s)P_{g_\infty^{\perp}}\big[(V_{a(\infty)} - V_{a(s)})u_{*}(s, \cdot) + N(u_*, W_{a(s)})\big]\,ds
  \]
  where we will again use a pointwise decay bound. This time we have to combine the strong dispersive bound provided by the key Proposition 9 in \cite{KS} with Sobolev. We decompose 
  \begin{align*}
   &\int_t^\infty \mathcal{S}(t-s)P_{g_\infty^{\perp}}\big[(V_{a(\infty)} - V_{a(s)})u_{*}(s, \cdot) + N(u_*, W_{a(s)})\big]\,ds\\& =  \int_t^{t+1}\mathcal{S}(t-s)P_{g_\infty^{\perp}}\big[(V_{a(\infty)} - V_{a(s)})u_{*}(s, \cdot) + N(u_*, W_{a(s)})\big]\,ds\\
   &+  \int_{t+1}^\infty \mathcal{S}(t-s)P_{g_\infty^{\perp}}\big[(V_{a(\infty)} - V_{a(s)})u_{*}(s, \cdot) + N(u_*, W_{a(s)})\big]\,ds
  \end{align*}
  For the first term, use 
  \[
  \big\|(V_{a(\infty)} - V_{a(s)})u_{*}(s, \cdot) + N(u_*, W_{a(s)})\big\|_{H^2}\ll \langle s\rangle^{-2}
  \]
  whence we get, using $H^2(\R^3)\subset L^\infty$, 
  \begin{align*}
 &\big\|\int_t^{t+1} \mathcal{S}(t-s)P_{g_\infty^{\perp}}\big[(V_{a(\infty)} - V_{a(s)})u_{*}(s, \cdot) + N(u_*, W_{a(s)})\big]\,ds\\
 &\ll\int_t^{t+1}\langle s\rangle^{-2}\,ds\leq \langle t\rangle^{-2},
  \end{align*}
  an integrable bound. 
  
  \smallskip 
  For the second integral above, we bound it by 
  \begin{align*}
  &\|\int_{t+1}^\infty \mathcal{S}(t-s)P_{g_\infty^{\perp}}\big[(V_{a(\infty)} - V_{a(s)})u_{*}(s, \cdot) + N(u_*, W_{a(s)})\big]\,ds\|_{L_x^\infty}\\
  &\lesssim \int_{t+1}^\infty (t-s)^{-1}\big\|\big[(V_{a(\infty)} - V_{a(s)})u_{*}(s, \cdot) + N(u_*, W_{a(s)})\big]\big\|_{W^{1,1}}\,ds\\
  &\ll  \int_{t+1}^\infty (t-s)^{-1}\langle s\rangle^{-1-\frac{\eps}{2}}\,ds\lesssim \log t\langle t\rangle^{-1-\frac{\eps}{2}},
  \end{align*}
  which is again integrable in $t$. This concludes the proof of Proposition~\ref{prop:fullextradisp}. 
\end{proof}

\centerline{\scshape Joachim Krieger }
\medskip
{\footnotesize
 \centerline{B\^{a}timent des Math\'ematiques, EPFL}
\centerline{Station 8, 
CH-1015 Lausanne, 
  Switzerland}
  \centerline{\email{joachim.krieger@epfl.ch}}
} 

\medskip

\centerline{\scshape Kenji Nakanishi}
\medskip
{\footnotesize
 \centerline{Department of Mathematics, Kyoto University}
\centerline{Kyoto 606-8502, Japan}
\centerline{\email{n-kenji@math.kyoto-u.ac.jp}}
} 

\medskip

\centerline{\scshape Wilhelm Schlag}
\medskip
{\footnotesize
 \centerline{Department of Mathematics, The University of Chicago}
\centerline{5734 South University Avenue, Chicago, IL 60615, U.S.A.}
\centerline{\email{schlag@math.uchicago.edu}
}
} 

\bigskip

\end{document}